\numberwithin{equation}{section}
\numberwithin{subsection}{section}
\newtheorem*{namedtheorem}{\theoremname}
\newcommand{\theoremname}{testing}
\newtheorem{theorem}{Theorem}[section]
\newtheorem{proposition}[theorem]{Proposition}
\newtheorem{proposition-definition}[theorem]
{Proposition-Definition}
\newtheorem{lemma}[theorem]{Lemma}
\theoremstyle{definition}
\newtheorem{definition}[theorem]{Definition}
\newtheorem{example}[theorem]{Example}
\newtheorem{remark}[theorem]{Remark}
\theoremstyle{remark}
 \newcommand\cH{\mathcal{H}}
\newcommand\cI{\mathcal{I}} 
 \newcommand\cL{\mathcal{L}}
\newcommand\cM{\mathcal{M}} 
\newcommand\cO{\mathcal{O}}
\newcommand\CC{\mathbb{C}}
 \newcommand\NN{\mathbb{N}}
 \newcommand\PP{\mathbb{P}}
 \newcommand\ZZ{\mathbb{Z}}
\newcommand{\map}{\varphi_{\Lambda_{2n}}}
\newcommand{\ls}{|\Lambda_{2n}|}
\newcommand\arr{\ifinner\to\else\longrightarrow\fi}
\newcommand\arrto{\ifinner\mapsto\else\longmapsto\fi}
\def\displaytimes_#1{\mathrel{\mathop{\times}\limits_{#1}}}
\def\displayotimes_#1{\mathrel{\mathop{\bigotimes}\limits_{#1}}}
\newcommand\pic{\operatorname{Pic}}
\newcommand{\proj}{\operatorname{Proj}}
\newcommand\doublelong[2]{\mathbin{\xymatrix{{}\ar@<3pt>[r]^{#1}
\ar@<-3pt>[r]_{#2}&}}}
\newlength{\ignora}
\newcommand{\Sec}{\mathrm{Sec}}
\newcommand{\sym}{\operatorname{Sym}}
\newcommand{\lra}{\longrightarrow}
\newcommand{\ra}{\rightarrow}
\begin{document}

\title[Forgetful linear systems and RNCs over $\cM_{0,2n}^{GIT}$]{Forgetful linear systems on the projective space and rational normal curves over $\cM_{0,2n}^{GIT}$}

\author[Bolognesi]{Michele Bolognesi}

\address{Scuola Normale Superiore\\Piazza dei Cavalieri 7\\
56126 Pisa\\Italy\\ and Institut f\"{u}r Mathematik\\Humboldt Universit\"{a}t zu Berlin\\Rudower Chaussee 25\\
10099 Berlin\\Germany}
\email[Bolognesi]{michele.bolognesi@sns.it; bolognes@math.hu-berlin.de}

\subjclass[2000]{Primary:14H60; Secondary:14H45}

%\date{\arrday}

\begin{abstract}

Let $\cM_{0,n}$ the moduli space of $n$-pointed rational curves. The aim of this note is to give a new geometric construction of $\cM_{0,2n}^{GIT}$, the GIT compactification of $\cM_{0,2n}$, in terms of linear systems on $\PP^{2n-2}$ that contract all the rational normal curves passing by the points of a projective base. These linear systems are somehow a projective analogue of the forgetful maps between the Mumford-Knudsen compactifications $\overline{\cM}_{0,2n+1}$ and $\overline{\cM}_{0,2n}$, but on the other hand they contract some components of the boundary, yielding then a rational map onto $\cM_{0,2n}^{GIT}$. The construction is performed via a study of the so-called $\textit{contraction}$ maps from $\overline{\cM}_{0,2n}$ to $\cM_{0,2n}^{GIT}$ and of the canonical forgetful maps. As a side result we also find a linear system on $\overline{\cM}_{0,2n}$ whose associated map is the contraction map $c_{2n}$.

% If $n\geq 3$ it is known to be a $(n-3)$-dimensional quasi-projective variety that comes mainly with two different %compactifications: the Mumford-Knudsen compactification $\overline{\cM}_{0,n}$ and the GIT compactification %$\cM_{0,n}^{GIT}$. For very small values of $n$ they coincide, but as $n$ grows the boundary of %$\overline{\cM}_{0,n}$ contains more curves than that of $\cM_{0,n}^{GIT}$ and there exists always a %\textit{contraction} morphism from $\overline{\cM}_{0,n}$ to $\cM_{0,n}^{GIT}$. Let $n$ be a positive integer, the %aim of this note is to construct a universal family of rational normal curves in $\bP^{2n-2}$ over %$\cM_{0,2n}^{GIT}$ as the fibers of a rational map defined on $\bP^{2n-2}$ and to clarify in this framework the %relation between the two compactifications of $\cM_{0,2n}$. As a side result we obtain a new, completely %geometrical, construction of $\cM_{0,2n}^{GIT}$.

\end{abstract}

\maketitle

\section{Introduction}

Despite their quite long story, the moduli spaces $\cM_{0,n}$ of pointed rational curves still have a central role in algebraic geometry. In the last two decades the Mumford-Knudsen compactifications $\overline{\cM}_{0,n}$ have drawed the attention of many mathematicians, especially in connection with mathematical physics and enumerative geometry. For instance, their Chow ring was completely described in \cite{keel} and much work has been made to prove the Fulton-Faber conjectures on effective divisors and curves (see e.g. \cite{gabigib}, \cite{keelkernan}, \cite{verme}). These conjectures basically say that the cone of effective curves (respectively effective divisors) is generated by the one-dimensional boundary strata (resp. by the boundary divisors). While the conjecture on curves has been proved, to my knowledge, for $n\leq 7$ \cite{keelkernan}, that about divisors is known to be false \cite{verme}.

\smallskip

Due to their relation with rational normal curves the construction of the spaces $\cM_{0,n}$ can be considered almost classical. They parametrize ordered configurations of distinct $n$ points on the projective line and thus they are not compact, since one expects to have limit conifigurations where two points coincide in some sense.

\smallskip

In order to get a compactification of $\cM_{0,n}$ there exist two main roads. The first is via Geometric Invariant Theory (see for instance \cite{do:pstf}), and it consists first in considering the algebra $R_1^n$ of $PGL(2)$-invariant sections of all the powers of a polarization $L$ on $(\PP^1)^n:=\PP^1\times \dots \times \PP^1$. Then one constructs the GIT compactification $\cM_{0,n}^{GIT}$ as $\proj(R_1^n)$, and this variety contains all the stable and semistable points with respect to the given linearization. Moreover $\cM_{0,n}^{GIT}$ comes with a natural embedding in the projectivized space $\PP (H^0((\PP^1)^n,L)^{PGL(2)})^*$ of invariant sections of the polarization.\\ % vai a vedere su pstf

Roughly speaking, via GIT one gets a compact space since this compactification allows two (or more) points to coincide. We recall moreover that if $n$ is even then there exist strictly semistable points and they are the singular locus of $\cM_{0,n}^{GIT}$, whereas if $n$ is odd then stable and semistable points coincide and the moduli space is smooth.

\smallskip

The second main compactification of $\cM_{0,n}$ is the Mumford-Knudsen one, usually denoted $\overline{\cM}_{0,n}$, that is given by stable curves. The points of $\overline{\cM}_{0,n}$ correspond to isomorphism classes of objects of the form $(C,x_1,\dots, x_n)$ where $C$ is a complete curve of arithmetic genus 0 with at most ordinary double points and the $x_i$ are distinct smooth points of $C$. The fact that $\overline{\cM}_{0,n}$ is compact, which may seem counterintuitive is due to the following fact. When one forces two points $x_j,x_k$ to coincide at a smooth point $p\in C$, the limit curve in $\overline{\cM}_{0,n}$ has a new component glued at the point $p$ and $x_j,x_k$ are distinct points on this new component. If $n\geq 3$, $\overline{\cM}_{0,n}$ is a genuine algebraic variety and, unlikely the GIT compactification, it is always smooth.

\smallskip

Both $\overline{\cM}_{0,n}$ and $\cM_{0,n}^{GIT}$ contain $\cM_{0,n}$ as an open subset but the 
compactification given by $\overline{\cM}_{0,n}$ is slighlty finer on the boundary. Moreover there exists a \textit{contraction} surjective morphism 

$$c_n:\overline{\cM}_{0,n} \ra \cM_{0,n}^{GIT}$$

that contracts some subschemes of the boundary of $\overline{\cM}_{0,n}$ while it is an isomorphism on the open set $\cM_{0,n}$.

\smallskip

In this paper we go through the relation between different compactifications of $\cM_{0,n}$, rational normal curves and linear systems on the projective space. The link between rational normal curves and $\cM_{0,n}$ was formalized in modern terms by Kapranov in \cite{kapra} and \cite{kapchowquot} (see also \cite{do:pstf}, Sect. III.2). Following Kapranov, by a \textit{Veronese curve} we will mean a rational normal curve of degree $m$ in $\PP^m$, with $m\geq 2$, i.e. a curve projectively equivalent to $\PP^1$ in its $m^{th}$ Veronese embedding.
% The geometry of Veronese curves stands out as great classic in algebraic geometry and has been studied thoroughly because of its deep and remarkable properties. 
We recall that any set of $n+3$ points in general position in $\PP^n$ lie on a unique Veronese curve.

\smallskip

Let now $\cH$ be the Hilbert scheme parametrizing all subschemes of $\PP^{n-2}$. Kapranov realized that there is an isomorphism between the subscheme $V_0(p_1,\dots,p_n) \subset \cH$ of Veronese curves passing by $n$ general points in $\PP^{n-2}$ and $\cM_{0,n}$. Moreover he extended this to an isomorphism 

$$\kappa_n:\overline{\cM}_{0,n}\lra V(p_1,\dots, p_n),$$

where $V(p_1,\dots,p_n)$ is the closure of $V_0$ in $\cH$. $V(p_1,\dots, p_n)$ is the subscheme of $\cH$ given by all degree $n-2$ non-degenerate rational curves. Hence basically by taking the closure of $V_0$ one admits into the picture also reducible curves (see Sect. \ref{kapre} for details); we will call these curves in the boundary \textit{reducible Veronese curves}.

\smallskip

Let $n$ be a positive integer. Let us consider a set $W$ of general points $e_i,\ i=1,\dots, 2n$ in $\PP^{2n-2}$ and the linear system $|\Omega_{2n}|$ of forms of degree $n$ that vanish with multiplicity $n-1$ at the $2n$ points of $W$. Let moreover $\varphi_{\Omega_{2n}}$ be the rational map associated to $|\Omega_{2n}|$. The main result of this paper is the following.

\newcommand{\mapd}{\varphi_{\Omega_{2n}}}

\begin{theorem}

There exists an isomorphism $|\Omega_{2n}|\cong \PP H^0((\PP^1)^{2n},L)^{PGL(2)}$ and the closure of the image of $\varphi_{\Omega_{2n}}$ is  $\cM_{0,2n}^{GIT}$. The closure of the fiber of $\varphi_{\Omega_{2n}}$ over each point $p \in \cM_{0,2n}\subset \cM_{0,2n}^{GIT}$ is the Veronese curve associated to that point via the Kapranov isomorphism $\kappa_n$.

\end{theorem}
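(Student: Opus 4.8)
The plan is to determine $\mapd$ from its restriction to the Veronese curves furnished by Kapranov's isomorphism $V(e_1,\dots,e_{2n})\cong\overline{\cM}_{0,2n}$. Fix a smooth rational normal curve $C\subset\PP^{2n-2}$ of degree $2n-2$ passing through the $2n$ points of $W$, together with a parametrization $\nu\colon\pone\larrowsim C$. Since $\nu^{*}\cO_{\PP^{2n-2}}(1)=\cO_{\pone}(2n-2)$, any $F\in|\Omega_{2n}|$ pulls back to a section of $\cO_{\pone}\big(2n(n-1)\big)$ vanishing to order $n-1$ at each of the $2n$ preimages $t_i=[a_i:b_i]$ of the points of $W$. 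Such sections form a space cut out by $2n(n-1)$ conditions inside one of dimension $2n(n-1)+1$; it is therefore one-dimensional, spanned by $\prod_{i=1}^{2n}(b_i s-a_i t)^{n-1}$. Hence every $F$ restricts on $C$ to a fixed scalar multiple of this distinguished section, the ratios of the members of $|\Omega_{2n}|$ are constant along $C$, and $\mapd$ contracts $C$ to a point. As $2n+1=(2n-2)+3$ general points of $\PP^{2n-2}$ lie on a unique Veronese curve, through a general $x\in\PP^{2n-2}$ there passes a unique such $C$ meeting $W$; these curves thus foliate a dense open subset of $\PP^{2n-2}$, and the general fibre of $\mapd$ is one of them. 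This is the content of the last assertion.

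Next I would promote the scalar above to the asserted isomorphism. As $C$ ranges over the Veronese curves through $W$, the induced configurations $(t_1,\dots,t_{2n})$ on $\pone$ trace out $\cM_{0,2n}$, and the rule $\nu^{*}F=c_F\cdot\prod_i(b_i s-a_i t)^{n-1}$ defines a linear map $c\colon \H^{0}(\PP^{2n-2},\cI_W^{\,n-1}(n))\lra\H^{0}((\pone)^{2n},L)^{\PGL(2)}$. Indeed $c_F$ is multilinear in the $2n$ pairs $(a_i,b_i)$ and $\PGL(2)$-invariant---a bracket polynomial of weight $(1,\dots,1)$---which is exactly a degree-one invariant for the symmetric polarization $L$.

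Then I would show $c$ is an isomorphism. Injectivity is immediate: if $c_F\equiv0$ then $F$ vanishes on every Veronese curve through $W$, and as these cover $\PP^{2n-2}$ we get $F=0$. Surjectivity reduces to matching dimensions, and here lies the heart of the matter: one must establish the interpolation identity $\dim\H^{0}(\PP^{2n-2},\cI_W^{\,n-1}(n))=\frac{1}{n+1}\binom{2n}{n}$, i.e. that the $2n$ points, each of multiplicity $n-1$, impose independent conditions on forms of degree $n$ (the virtual dimension $\binom{3n-2}{n}-2n\binom{3n-4}{n-2}$ equals this Catalan number, which is in turn $\dim\H^{0}((\pone)^{2n},L)^{\PGL(2)}$). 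I expect this to be the \emph{main obstacle}: fat-point interpolation in $\PP^{2n-2}$ is delicate, so rather than invoke a generic Alexander--Hirschowitz-type result I would exploit the special geometry at hand---for instance by constructing, for each perfect matching of $\{1,\dots,2n\}$, an explicit degree-$n$ form of multiplicity $n-1$ along $W$ realizing the corresponding bracket monomial, thereby proving surjectivity of $c$ directly.

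Finally, these steps identify $\mapd$, on the open locus swept out by the curves $C$, with the composite of the rational quotient $\PP^{2n-2}\to\cM_{0,2n}$ and the tautological map $\cM_{0,2n}\into\PP\big(\H^{0}((\pone)^{2n},L)^{\PGL(2)}\big)^{*}$ given by the complete system of degree-one invariants. By construction the latter is the natural embedding described in the introduction, whose image has closure $\proj R_1^{2n}=\cM_{0,2n}^{GIT}$; passing to closures yields the second assertion, while the first paragraph gives the stated description of the fibres over $\cM_{0,2n}$.
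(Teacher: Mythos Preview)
Your approach is genuinely different from the paper's and, modulo one unjustified step, sound. The paper proceeds top--down: it \emph{defines} a rational map $\varphi_{\Lambda_{2n}}$ moduli-theoretically as the composition $c_{2n}\circ f_{2n+1}\circ r_{2n+1}$ (so that the image is $\cM_{0,2n}^{GIT}$ and the fibres are Veronese curves by construction), and then identifies the inducing linear system with $|\Omega_{2n}|$ via B\'ezout on the contracted rational normal curves together with the $\Sigma_{2n}$-symmetry of $W$ and a blow-up argument to force the degree to be $n$ rather than a multiple. You go bottom--up: start from $|\Omega_{2n}|$, show directly that it contracts the Veronese curves, and then construct a linear map $c$ from $V_{\Omega_{2n}}$ to the space of invariants. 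Your route is more direct and avoids the moduli-theoretic scaffolding; the paper's route gives the image and fibre statements for free but must work harder to pin down the linear system.

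The gap is your one-line justification of the map $c$. You assert that $c_F$ is ``multilinear in the $2n$ pairs $(a_i,b_i)$ and $\PGL(2)$-invariant'', but this is not automatic: the scalar $c_F$ depends on the choice of affine lift $\tilde\nu:\CC^2\to\CC^{2n-1}$, and without a normalisation the degree bookkeeping fails (naively, rescaling $(a_1,b_1)$ rescales $\prod_i(b_is-a_it)^{n-1}$ by $\lambda^{n-1}$ while leaving the curve unchanged). What makes it work is the canonical choice $\nu_j(s,t)=[j{+}1,\,2n]\cdot\prod_{i\neq j+1,\ i\le 2n-1}(b_is-a_it)$, for which each $\nu_j$ is itself multilinear in the $(a_k,b_k)$; then $\tilde\nu^*F$ has multidegree $(n,\dots,n)$ and the quotient $c_F$ has multidegree $(1,\dots,1)$ as claimed. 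You should make this explicit. Your proposed alternative---constructing, for each perfect matching, an explicit $F_M\in V_{\Omega_{2n}}$ with $c_{F_M}=\prod[i_kj_k]$---would also close this gap and give surjectivity in one stroke; the obvious candidates are the products of $n$ hyperplanes, each spanned by $n-1$ points of $W$ and arranged so that every $e_i$ lies on exactly $n-1$ of them.

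On your ``main obstacle'': the paper does compute $\dim V_{\Omega_{2n}}$, but not by fat-point interpolation. Using the coordinate points and Remark~\ref{forms}(3), the forms vanishing to order $n-1$ at $e_1,\dots,e_{2n-1}$ are the squarefree degree-$n$ monomials in $x_1,\dots,x_{2n-1}$ (there are $\binom{2n-1}{n}$ of them); the further vanishing to order $n-1$ at $e_{2n}=[1:\dots:1]$ imposes $\binom{2n-1}{n-2}$ visibly independent linear conditions, and $\binom{2n-1}{n}-\binom{2n-1}{n-2}=\frac{(2n)!}{(n+1)!\,n!}$. This elementary count would slot directly into your argument.
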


Furthermore the rational map $\varphi_{\Omega_{2n}}$ is strictly related to the canonical forgetful maps $f_i:\overline{\cM}_{0,2n+1}\ra \overline{\cM}_{0,2n}$. In fact there exists a blow-down birational morphism $b_{2n+1}:\overline{\cM}_{0,2n+1} \ra \PP^{2n-2}$, whose description is due to Kapranov (and that will be described in detail in Section \ref{bang}). The birationality between $\overline{\cM}_{0,2n+1}$ and $\PP^{2n-2}$ implies that there exists a rational map $\map:\PP^{2n-2}\ra\cM_{0,2n}^{GIT}$ generically of relative dimension one that makes the following diagram commute.

\begin{equation}\label{pari}
\xymatrix{ \overline{\cM}_{0,2n+1} \ar[d]_{b_{2n+1}} \ar[r]^{f_{2n+1}} & \overline{\cM}_{0,2n} \ar[d]^{c_{2n}} \\
\PP^{2n-2} \ar[r]^{\map} & \cM_{0,2n}^{GIT} }
\end{equation} 

The map $\map:\PP^{2n-2}\ra\cM_{0,2n}^{GIT}$ is in fact just the composition of the rational inverse of $b_{2n+1}$ with $c_{2n}\circ f_{2n+1}$. It turns out that our map $\mapd$ is exactly the missing arrow $\map$. 
Of course, diagram \ref{pari} exists also for the odd times pointed rational curves, but giving a result similar to ours in this case seems more difficult. In fact in the proof of Proposition \ref{main}, and notably in the application of Bezout theorem, we make substantial use of the parity of the number of marked points. It is likely however that the odd case will be object of further work.

\smallskip

%It is also well known \cite{kapra},\cite{hassetpointed} that $\overline{\cM}_{0,2n+1}$ is isomorphic to a certain %blow up of $\PP^{2n-2}$ along some linear subspaces spanned by subsets of $W$. Then, by pulling back via the %blow-down map to $\overline{\cM}_{0,2n+1}$ the closure of the fibers of $\mapom$, they turn out to be a sort of %universal curve over $\cM_{0,2n}^{GIT}$, though of course the GIT compactification has no functorial interpretation.

%Recently, the equations defining $\cM_{0,n}^{GIT}$ have been described in \cite{vakilpointed}, it would be %interesting to find them out by describing explicitly the linear systems described in this paper, thing that seems %reasonably feisable.\\

We remark moreover that taking the closure of the image of the map $\mapd$ gives a completely geometric construction of $\cM_{0,2n}^{GIT}$. Another explicit description of the same moduli spaces by means of linear systems on projective spaces was given by C.Kumar in his beautiful papers \cite{ku:ivb} and \cite{kumar2}. Continuing the work of Coble on the Weddle manifold \cite{cobwed}, \cite{cobl2} Kumar generalized the classical rationalization of the Segre cubic (i.e. $\cM_{0,6}^{GIT}$) given by quadrics passing by 5 general points in $\PP^3$ and gives a vector bundle theoretical interpretation to this construction. As Igor Dolgachev pointed out, Kumar's description and ours are closely related. The relation is outlined in Section \ref{relation}, where we also show that the resolution to $\overline{\cM}_{0,2n}$ of Kumar's maps gives exactly the contraction map $c_{2n}$. 

\smallskip

I was not an expert of this field until, while I was working with A.Alzati on \cite{albol}, we came across these strange linear systems contracting Veronese curves: in fact the case $n=3$ is already described in there. Thus I've been emailing and asking some people who have been helping me very much. Among them I would like to mention Alberto Alzati, Andrea Bruno, Renzo Cavalieri, Igor Dolgachev, Gavril Farkas, Brandon Hasset, Pietro Pirola, Dajano Tossici and especially Angelo Vistoli. Thanks also to Silvia Brannetti for a TeXnical consulence. It is possible that some experts of this field may already know some of the results contained in this paper, but the lack of a precise reference (to my knowledge) and the beauty of the subject pushed my to write them down.

\smallskip

For reasons of simplicity, in this paper we work over the field $\CC$ of complex numbers.

\medskip

\textbf{Description of the contents.}

\medskip

In Section 2 we give a brief account of the Knudsen-Mumford and GIT compactifications.
In Section 3, after introducing the contraction map $c_n:\overline{\cM}_{0,n} \ra \cM_{0,n}^{GIT}$ and the construction of $\overline{\cM}_{0,n}$ as a blow-up of $\PP^{n-3}$, we define the rational map $\map$. Finally in Section 4, after some general remarks on linear systems on the projective space we prove the main theorem by showing that the map $\map$ defined in Section 3 is defined by our linear system $|\Omega_{2n}|$. Moreover, in Section \ref{relation} we go through the relation beween our linear systems and those defined in \cite{kumar2} and give an explicit complete linear series on $\overline{\cM}_{0,2n}$ which defines the contraction morphism $c_{2n}$. Finally we relate our costructions to some recent results (\cite{brumel},\cite{GKM}) on the effective cone of $\overline{\cM}_{g,n}$ and its fibrations.

\section{Stable curves vs. Geometric Invariant Theory}

\subsection{Compactification via stable curves.}\label{kapre}

\begin{definition}

A stable $n$-pointed curve of genus 0 is a connected (possibly reducible) curve $C$ togehter with a set of $n$ ordered smooth distinct points $x_1,\dots,x_n \in C$, that satisfies the following properties:

\begin{enumerate}
\item C has only ordinary double points and every irreducible component of $C$ is isomorphic to the projective line $\PP^1$.
\item The arithmetic genus of $C$ is 0.
\item On each component of the curve there are at least three points that are either marked or double.
\end{enumerate}

\end{definition}

The fine moduli space $\overline{\cM}_{0,n}$ was constructed by Knudsen in \cite{knudsenpointed}, where he also proved that it is a smooth compact algebraic variety for $n\geq 3$. He also showed that there exist $n$ forgetful morphisms

$$f_i: \overline{\cM}_{0,n} \longrightarrow \overline{\cM}_{0,n-1}.$$

The image under $f_i$ of a $n$-curve is obtained by forgetting the $i$ labelled point and passing to the stable model. This morphism makes $\overline{\cM}_{0,n}$ the universal curve over $\overline{\cM}_{0,n-1}$. 

\smallskip

As anticipated in the Introduction, Kapranov in \cite{kapra} gave a very nice description of $\overline{\cM}_{0,n}$ as a closed subvariety $V(p_1,\dots,p_n)$ of the Hilbert scheme $\cH$ parametrizing all subschemes of $\PP^{n-2}$. 
It is useful to understand what are the curves parametrized by the boundary $V(p_1,\dots,p_n)/V_0(p_1,\dots,p_n)$. 
They are reducible Veronese curves, i.e. reducible non-degenerate degree $n$ curves in $\PP^n$ such that each component is a Veronese curve in its projective span. These correspond, via $\kappa_n$, to the curves contained in $ \partial \overline{\cM}_{0,2n}:=\overline{\cM}_{0,2n}/\cM_{0,2n}$. For instance, a 5-pointed stable curve corresponds to a cubic curve $G$ in $\PP^3$ passing by 5 general points. Notably if the stable curve is reducible and has two components, then $G$ is the reducible Veronese curve in $\PP^3$ given by the union of a conic lying on the plane spanned by 3 of the 5 points and a line passing by the other 2 points.

\medskip

\subsection{GIT quotients and their compactifications.}

The GIT compactification of $\cM_{0,n}$ comes from a different framework. Here in fact we are concerned by
the variety $(\PP^1)^n:=\PP^1\times \dots \times \PP^1$ and the diagonal action of $PGL(2)$ on $(\PP^1)^n$

\begin{eqnarray}
\sigma_n: PGL(2) \times (\PP^1)^n & \rightarrow & (\PP^1)^n;\\
\sigma_n(g,(x_1,\dots,x_n)) & \mapsto & (g x_1, \dots, g x_n).
\end{eqnarray}

Let us consider the ample line bundle $\mathcal{L}:=\boxtimes_{i=1}^n \cO_{\PP^1}(1)$ on $(\PP^1)^n$. There exists a unique $PGL(2)$-linearization for $\cL$ (\cite{do:pstf}, Chapter I, Prop. 1) thus the algebra

$$R_1^n:= \bigoplus_{k=0}^{+\infty}H^0((\PP^1)^n,\mathit{L}^k)^{PGL(2)}$$

 of $PGL(2)$ invariant sections of (powers of) $\mathit{L}$ is canonically defined. Since $PGL(2)$ is a reductive algebraic group, $R_1^n$ is of finite type over $\CC$, thus we define $\cM_{0,n}^{GIT}$ as the projective algebraic variety $\proj(R_1^n)$, that is naturally embedded in $\PP (H^0((\PP^1)^n,L)^{PGL(2)})^*$. Moreover we remark that $(\PP^1)^n$ is a proper algebraic variety, and the action is regular. This means that we can consider the set $(\PP^1)^n_{ss}(\cL)$ of semistable points of  $(\PP^1)^n$ with respect to the linearized invertible sheaf $\cL$. Now, its categorical quotient $(\PP^1)^n_{ss}(\cL)/PGL(2)$ exists \cite{GIT} and we have an isomorphism

$$(\PP^1)^n_{ss}(\cL) \cong \proj(R_1^n).$$

Let $(\PP^1)^n_{s}(\cL)$ be the subset of stable points. The quotient $(\PP^1)^n_{s}(\cL)/PGL(2)$ is
a geometric quotient for $(\PP^1)^n_{s}(\cL)$ and, if $n$ is even, it is a proper open subset of $\proj(R_1^n)$. In this case in fact the locus of strictly semistable points is the singular locus of $\cM_{0,n}^{GIT}$, which is made up by $\left(\begin{array}{c}
n \\ \frac{n}{2}\\
\end{array}\right)/2$ ordinary double points.
On the other hand, when $n$ is odd $(\PP^1)^n_{ss}(\cL)=(\PP^1)^n_{s}(\cL)$ and $\cM_{0,n}^{GIT}$ is smooth.

\medskip

We remark that, when $n\leq 5$ the two compactifications described in this Section give rise to the same algebric varieties, whereas already for $n=6$, $\overline{\cM}_{0,6}$ is a small resolution of $\cM_{0,6}^{GIT}$, which is the Segre cubic.

\smallskip

In order to avoid confusion, in the following we will often call \textit{a stable curve} an element of $\overline{\cM}_{0,n}$ whereas by \textit{a configuration of points} we will mean an element of $\cM_{0,n}^{GIT}$.

\section{A map between moduli spaces $\overline{\cM}_{0,2n+1}\ra \cM_{0,2n}^{GIT}$.}

The goal of this section is to introduce in a moduli theoretical way a dominant rational map to $\cM_{0,2n}^{GIT}$ 

$$\varphi_{\Lambda_{2n}}: \PP^{2n-2} \longrightarrow \cM_{0,2n}^{GIT}\subset \PP (H^0((\PP^1)^{2n},L)^{PGL(2)})^*$$

\subsection{The contraction map $c_n: \overline{\cM}_{0,n} \ra \cM_{0,n}^{GIT}$.}\label{moduli}

In order to define the map $\map$ we need to introduce the contraction map $c_n:\overline{\cM}_{0,n} \ra \cM_{0,n}^{GIT}$. As it is well known the Mumford-Knudsen compactification provide way to compactify $\cM_{0,n}$ which is finer than that given by the GIT one. Both contain $\cM_{0,n}$ as an open set but on the boundary $\overline{\cM_{0,n}}$ reveals its finer nature. For instance consider an even $n > 3$, say 6, and the locus of curves like that in Figure \ref{pict}.

\begin{figure}[h!]
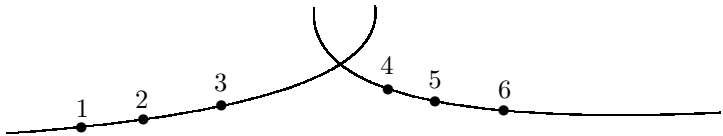

 \centering
 \input figura1.pic\caption{\small{A stable 6-pointed curve that determines a strictly semistable configuration of points.}}\label{pict}
 \end{figure}

These curves have two rational components, and the first $\frac{n}{2}=3$ points lie on one component and the last 3 on the other. The locus parametrizing these type of curves in $\overline{\cM_{0,6}}$ is a divisor whereas the locus of the associated configuration of points in $\cM_{0,6}^{GIT}$ (obtained by contracting one of the components of the curve) is just one strictly semistable point. In a very interesting and useful way, this processus of contracting components can be made "universal" and described by a \textit{contraction} morphism $c_{n}:\overline{\cM}_{0,n} \ra \cM_{0,n}^{GIT}$ which is the identity on $\cM_{0,n}$ and that is dominant on the boundary of $\cM_{0,n}^{GIT}$. The mere existence of the map follows from \cite{kapchowquot} where it is said that the moduli space of stable $n$-pointed curves is the inverse image of all GIT quotients of $(\PP^1)^n$ by all possible linearizations. An explicit description of this map has been given in \cite{lanar} where the following theorem is proven.

\begin{theorem}\cite{lanar}
The canonical isomorphism between the open susbsets of $\cM_{0,n}^{GIT}$ and $\overline{\cM_{0,n}}$ isomorphic to $\cM_{0,n}$ extends to a regular morphism

$$c_{n}:\overline{\cM}_{0,n} \ra \cM_{0,n}^{GIT}.$$

\end{theorem}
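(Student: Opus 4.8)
The plan is to extend the canonical isomorphism on $\cM_{0,n}$ across the boundary $\overline{\cM}_{0,n}\setminus\cM_{0,n}$ by a \emph{contraction} recipe that sends a stable curve to a semistable configuration of points, and then to upgrade the resulting set-theoretic map to an honest morphism. As a preliminary orientation: since $\overline{\cM}_{0,n}$ is smooth, hence normal, and $\cM_{0,n}^{GIT}=\proj(R_1^n)$ is projective, the valuative criterion applied to the discrete valuation rings $\cO_{\overline{\cM}_{0,n},\eta}$ at the generic points $\eta$ of the boundary divisors already shows that $c_n$ is defined in codimension one; so the indeterminacy locus has codimension $\geq 2$, and the substantive work lies over the deeper boundary strata. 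I will nonetheless produce the map uniformly.

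First I would give the contraction set-theoretically. A point of $\overline{\cM}_{0,n}$ is a stable curve $(C,x_1,\dots,x_n)$ with $C$ a tree of copies of $\PP^1$. Each node $p$ of $C$ determines a partition of the marks into the two sides of $p$, of sizes $a$ and $n-a$. Since the symmetric linearization $\cL$ makes a configuration semistable exactly when no point of $\PP^1$ carries more than $n/2$ of the marks, the recipe is to collapse, at every node, the side carrying fewer than $n/2$ marks onto that node; iterating contracts $C$ onto a single $\PP^1$ and yields a configuration $(y_1,\dots,y_n)$ in which no point is occupied by more than $n/2$ of the $y_i$, i.e.\ a semistable configuration. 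On $\cM_{0,n}$ this is the identity, so it agrees there with $c_n$. The delicate case of well-definedness is the balanced split $a=n-a=\tfrac n2$, which occurs only for $n$ even: the two ways of collapsing then give two configurations, each with exactly $n/2$ coincident points. These are not $PGL(2)$-equivalent, but they are $S$-equivalent and hence lie in the same fibre of the good quotient $(\PP^1)^n_{ss}(\cL)\to\proj(R_1^n)$, both mapping to the corresponding strictly semistable point, a node of $\cM_{0,n}^{GIT}$. Thus the recipe gives a well-defined point $g(C,x_1,\dots,x_n)\in\cM_{0,n}^{GIT}$, and the entire divisor of balanced curves is sent to a single node --- precisely the divisorial contraction separating the two compactifications.

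To upgrade $g$ to a morphism I would run the contraction in families. Using the universal curve $\overline{\cM}_{0,n+1}\to\overline{\cM}_{0,n}$ together with its $n$ sections, one contracts the universal curve relatively onto a family of $\PP^1$'s carrying $n$ (possibly colliding) sections, which is fibrewise semistable for $\cL$; the good-quotient property of $(\PP^1)^n_{ss}(\cL)\to\cM_{0,n}^{GIT}$ --- which builds in exactly the $S$-equivalence used above --- then converts this family into the desired morphism $c_n$, extending the isomorphism on $\cM_{0,n}$. Since $\overline{\cM}_{0,n}$ is reduced and $\cM_{0,n}^{GIT}$ separated, this morphism coincides with the rational map $c_n$, so $c_n$ is regular. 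Dually, one may instead pull back the invariants $V=H^0((\PP^1)^n,L)^{PGL(2)}$, extend them to sections of the relevant line bundle on $\overline{\cM}_{0,n}$, and check that the resulting linear system is base-point-free, base-point-freeness being equivalent to $g$ being everywhere defined.

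The main obstacle is the analysis over the codimension $\geq 2$ strata and, for $n$ even, over the strictly semistable locus. One must verify that the relative contraction of the universal curve exists as an honest flat family over all of $\overline{\cM}_{0,n}$ --- in particular at the deep strata where several tails collapse simultaneously and several nodes may be balanced at once --- and that on the balanced divisors the two candidate limits genuinely coincide in the quotient. Equivalently, the extended invariant linear system must be shown to have no base point along these strata. For $n$ even the $S$-equivalence identification of the two collapses as a single strictly semistable point is the heart of the matter, since there honest divisors of $\overline{\cM}_{0,n}$ are contracted to the singular points of $\cM_{0,n}^{GIT}$.
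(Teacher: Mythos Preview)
Your proposal is correct and is essentially the same argument as the one the paper sketches (following \cite{lanar}), only packaged differently. The paper identifies, via the weighted dual tree, a unique \emph{central component} (the vertex all of whose complementary subtrees carry fewer than $n/2$ marks) and contracts every other component onto it; your recipe ``at each node collapse the side carrying fewer than $n/2$ marks'' singles out exactly the same component, so the two set-theoretic descriptions coincide. Both treatments isolate the same exceptional case for $n$ even (no central component precisely when some node is balanced $n/2$--$n/2$, and one checks as you do that there can be at most one such node), and both resolve it by observing that the two candidate collapses are $S$-equivalent and hence land on the same strictly semistable point of $\cM_{0,n}^{GIT}$.

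Where you go a bit further than the paper's sketch is in indicating how to promote the set map to a morphism: the paper simply says it is easy to see the map is regular, while you propose either running the contraction on the universal curve in families or, dually, extending the invariant linear system and checking base-point-freeness. That is the right shape of argument, and your honest identification of the remaining obstacle (flatness of the relative contraction over the deep strata, and the balanced divisors) is exactly where the work in \cite{lanar} lies. One small caveat: your opening remark that normality plus properness forces the indeterminacy locus into codimension $\ge 2$ is true but does not by itself buy you anything towards extension, as you acknowledge; it is orientation rather than argument.
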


The precise statement in \cite{lanar} regards marked, and not pointed, curves, i.e. it does not care about the order of the special points, but the proof extends without any harm to the framework of pointed curves so we won't go too much into the details here. The interested reader is suggested to look in \cite{lanar} where this and other related results are described in great detail. Anyway for sake of completeness we will describe how $c_n$ is defined on the boundary.

\smallskip

The key idea of the proof in \cite{lanar} is to show that for any stable $n$-marked rational curve there exists a unique \textit{central component}, on which one can contract all the other branches in order to obtain a stable configuration of points. This is seen in a clearer way by considering the dual graph of the curve.

\smallskip

Recall in fact that to any genus 0 nodal curve $C$ we can associate its dual graph. In this context, each vertex
of the graph represents a component of $C$ and two vertices are connected
by an edge if the corresponding components intersect. Hence, to every stable nodal curve of genus zero we associate a connected tree whose vertices are in bijection with the irreducible rational components of the curve itself. 
To an $n$-pointed nodal curve $(C, p_1,\dots , p_n)$ of genus 0 we associate a weighted tree in the following way. Let $v_i$ be the vertex corresponding to the component $C_i$; we associate to $v_i$ the weight $w_i$ which is the number of marked points lying on $C_i$. The total weight $W(T)$ of a tree is simply the number of marked points of the tree $T$.
If $v\in T$ is a vertex, we denote by $e_j(v)$ the edges with one end in v and call them
the edges starting at v. A connected weighted tree is called stable if for
every vertex $v_i$ we have $w_i + \sum_{j} e_j(v_i) \geq 3$. It is easy to see that a pointed stable curve of 
genus 0 is stable if and only if its corresponding tree $T$ is stable.

Let $v$ be a vertex of an $n$-weighted tree $T$ and $e_1(v),\dots , e_n(v)$
the edges of $T$ starting at $v$. Then the subgraph $T \ \{v, e_1(v),\dots , e_n(v)\}$ consists
of $n$ weighted trees $T_1, \dots , T_n$ complementary
to the vertex $v$. A vertex $v$ of an $n$-weighted tree $T$ is called \textit{a central vertex}
if $W(Ti) < \frac{m}{2}$ for every subtree complementary to $v$.

\smallskip

It can be checked that, under certain assumptions, an $n$-weighted tree always has a central vertex.
This results is translated in the language of stable curves in the following terms.

\begin{lemma}
Given a stable $n$-pointed curve $(C,p_1,\dots,p_n)$, there exists always a central component, unless $n$ is even and the curve has two components containing $\frac{n}{2}$ points each.
\end{lemma}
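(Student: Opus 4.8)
The plan is to discard the curve and reason entirely on its weighted dual tree $T$: the vertices $v_i$ carry weights $w_i\geq 0$ with $\sum_i w_i=n$, and stability reads $w_i+\deg(v_i)\geq 3$ for every $i$. For a vertex $v$ let $f(v)$ be the maximal weight among the complementary subtrees $T_1,\dots,T_{\deg(v)}$ obtained by deleting $v$; by definition $v$ is central exactly when $f(v)<n/2$. I will run the classical centroid descent on the function $f$ and show that it terminates either at a central vertex or at a single edge cutting $T$ into two halves of weight $n/2$.

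First I extract from stability the two facts that make the descent work. A leaf $\ell$ of $T$ has $\deg(\ell)=1$, so stability forces $w(\ell)\geq 2$; since every complementary subtree contains a leaf of $T$ (its vertex farthest from $v$), each complementary subtree has weight $\geq 2$, and in particular is non-empty. Consequently every vertex has \emph{at most one} complementary subtree of weight $\geq n/2$: two such would sum to $\geq n$, and as the complementary weights sum to $n-w(v)\leq n$ this forces both to equal $n/2$, $w(v)=0$, and all other complementary subtrees empty; non-emptiness then leaves only these two, so $\deg(v)=2$, contradicting $w(v)+\deg(v)\geq 3$.

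Now pick $v$ minimizing $f$ and suppose $f(v)>n/2$, attained by the unique heavy subtree $T_j$; let $v'$ be the neighbor of $v$ inside $T_j$. Deleting $v'$ yields one subtree $S\ni v$ of weight $n-W(T_j)<n/2$ together with the forward pieces lying in $T_j$. Each forward piece has weight strictly below $W(T_j)$: this is immediate if $w(v')\geq 1$, while if $w(v')=0$ then stability gives $\deg(v')\geq 3$, hence at least two forward pieces, each of weight $\geq 2$, so again each is $<W(T_j)$. Thus $f(v')<f(v)$, contradicting minimality; therefore the minimum satisfies $f(v)\leq n/2$.

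It remains to read off the dichotomy at the minimizer. If $f(v)<n/2$ then $v$ is central and we are done. If $f(v)=n/2$, the unique subtree realizing it has weight exactly $n/2$, so the edge joining $v$ to its neighbor there separates $T$ into two parts of weight $n/2$ — forcing $n$ even and producing precisely the excluded configuration, in which the marked points break into two halves of $n/2$ (as for the two-component curve of Figure \ref{pict}). In that case no vertex can be central, since any vertex lies on one side of this edge and hence has a complementary subtree, containing the opposite half, of weight $\geq n/2$. The one delicate point, and the place where stability is indispensable, is the guarantee of strict descent at an unmarked vertex $v'$: without the bound $w(v')+\deg(v')\geq 3$ a bivalent weight-zero component could stall the process, so ruling it out is exactly what drives the argument to a conclusion.
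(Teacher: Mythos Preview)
Your argument is correct and is the standard centroid-descent on the weighted dual tree. The paper does not supply its own proof of this lemma: it only asserts that the tree-theoretic statement ``can be checked'' and refers to \cite{lanar}, so there is no in-paper proof to compare against; your approach is presumably the one used there.

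One point worth flagging: what you actually establish in the borderline case $f(v)=n/2$ is that some \emph{edge} of $T$ cuts it into two pieces of total weight $n/2$ each, i.e.\ the curve has a node whose removal separates it into two connected sub-curves carrying $n/2$ marks apiece. The lemma as literally worded speaks of ``two components containing $n/2$ points each,'' which, read as two \emph{irreducible} components, is too narrow: a path of four components with weights $2,1,1,2$ (so $n=6$) is stable and has no central vertex, yet no single component carries $3$ marks. You seem aware of this and have proved the correct, slightly broader statement, which is also what the paper actually needs when it explains how $c_n$ sends these exceptional boundary loci to the strictly semistable points.
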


When such a central component exists then the map $c_{n}:\overline{\cM}_{0,n} \ra \cM_{0,n}^{GIT}$ consists in contracting the non-central components of a stable curve on the central one thus obtaining a (semi-) stable configuration of points. When the central component does not exist one fixes the problem by hand since the 
$\left(\begin{array}{c}
n \\ \frac{n}{2}\\
\end{array}\right)$ boundary divisors in $\cM_{0,n}$ that paramtrize curves without central components are contracted, two by two in the obvious way, on the 
$\left(\begin{array}{c}
n \\ \frac{n}{2}\\
\end{array}\right)/2$ semistable points and it is easy to see that the obtained map is regular everywhere.

\medskip

\subsection{The blow-up construction of $\overline{\cM}_{0,n}$.}\label{bang}

Now we recall a construction of $\overline{\cM}_{0,n}$ as a sequence of blow-ups of $\PP^{n-3}$  that was first given by Kapranov. The way it is presented here owes anyway a lot to \cite{hassetpointed}.

\begin{theorem}\label{bloua}(\cite{hassetpointed} Sect. 6.2)

The Mumford-Knudsen compactification $\overline{\cM}_{0,n}$ has the following realization as a sequence of blow-ups of $\PP^{n-3}$. Let $q_1,\dots,q_{n-1}$ be general points in $\PP^{n-3}$:\\

\smallskip

\noindent 1: blow up the points $q_1,\dots, q_{n-1}$;\\
2: blow up proper transforms of lines spanned by pairs of the points
$q_1,\dots, q_{n-1}$;\\
3: blow up proper transforms of 2-planes spanned by triples of the points $q_i$; . . .\\
n-4: blow up proper transforms of (n-5)-planes spanned by (n-4)-tuples
of the points $q_i$.
\end{theorem}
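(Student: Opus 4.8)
The plan is to follow Kapranov's strategy: produce a birational morphism $\overline{\cM}_{0,n}\ra\PP^{n-3}$ and then recognise it, stratum by stratum, as the inverse of the iterated blow-up in the statement. Building on the Kapranov isomorphism $\kappa_n\colon\overline{\cM}_{0,n}\larrowsim V(p_1,\dots,p_n)$, I would first single out the last marked point and project the ambient $\PP^{n-2}$ from the point $p_n$. A Veronese curve of degree $n-2$ through $p_1,\dots,p_n$ projects, from the point $p_n$ lying on it, to a rational curve in $\PP^{n-3}$ passing through the images $q_1,\dots,q_{n-1}$ of $p_1,\dots,p_{n-1}$; recording this data yields a morphism $\pi_n\colon\overline{\cM}_{0,n}\ra\PP^{n-3}$. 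Equivalently, $\pi_n$ is the morphism attached to the complete linear system of the cotangent line class $\psi_n$ at $x_n$, which is big and nef and maps onto $\PP^{n-3}$. On the open locus $\cM_{0,n}$ this map is an isomorphism onto its image, since a smooth configuration is reconstructed from the projected data, so $\pi_n$ is birational.

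The second step is to pin down the loci contracted by $\pi_n$. Every boundary divisor of $\overline{\cM}_{0,n}$ is of the form $\delta_{0,S}$ for a subset $S$ of the markings with $2\le|S|\le n-2$, and I index it by the unique representative containing $n$, writing $S=I\cup\{n\}$ with $I\subseteq\{1,\dots,n-1\}$. The claim is that $\pi_n$ sends $\delta_{0,I\cup\{n\}}$ into the linear span $\langle q_i : i\in I\rangle\subset\PP^{n-3}$, a projective subspace of dimension $|I|-1$. Thus the divisors with $|I|=1$ collapse onto the points $q_i$, those with $|I|=2$ onto the lines $\langle q_i,q_j\rangle$, and so on up to $|I|=n-4$, where the span has codimension $2$; the divisors with $|I|=n-3$ are instead the proper transforms of the hyperplanes $\langle q_i: i\in I\rangle$ and are not contracted. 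This is exactly the combinatorics of the stated blow-up read backwards, with the $k$-element subsets governing the $k$-th step.

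The third step is an induction on the dimension of the centres, equivalently on the step number, identifying $\pi_n$ with the composite of blow-ups. Let $X_k$ be the variety obtained from $\PP^{n-3}$ after blowing up the spans of all subsets of at most $k$ of the $q_i$. Because $q_1,\dots,q_{n-1}$ are general, a subset of size $j$ spans a $\PP^{j-1}$ and distinct such spans meet as expected from general position, only along the span of the common indices; hence at each stage the centres are the \emph{proper transforms} of smooth linear spaces that have been separated by the previous blow-ups, so each $X_k\ra X_{k-1}$ blows up a disjoint union of smooth centres and every $X_k$ is smooth. One then checks that $\pi_n$ factors through every $X_k$, and that the resulting morphism $\overline{\cM}_{0,n}\ra X_{n-4}$ is a birational morphism of smooth projective varieties which is bijective on points, the boundary strata of $\overline{\cM}_{0,n}$ matching the exceptional divisors and their mutual intersections. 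By Zariski's main theorem it is then an isomorphism. The compatibility of the whole picture with the forgetful morphism $f_n\colon\overline{\cM}_{0,n}\ra\overline{\cM}_{0,n-1}$ allows the induction to descend from $n$ to $n-1$.

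The main obstacle is the transversality bookkeeping in the third step: one must verify that blowing up the spans in order of increasing dimension is legitimate, i.e. that after resolving all spans of subsets of size $\le k$ the proper transforms of the $(k+1)$-subset spans are smooth and pairwise disjoint, and that no base locus of $|\psi_n|$ survives away from these spans. This is precisely where the general position of $q_1,\dots,q_{n-1}$ enters, and where the induction via $f_n$ is the natural device, reducing the normal-bundle and disjointness computations at each centre to the already-established structure on $\overline{\cM}_{0,n-1}$.
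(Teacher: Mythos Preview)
The paper does not prove this theorem at all: it is stated with an explicit citation to Hassett (and implicitly to Kapranov's original papers) and is used as a black box. There is therefore no ``paper's own proof'' to compare against.

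That said, your sketch is the standard Kapranov argument and is essentially correct in outline. The identification of the blow-down $b_n$ with the map given by the complete linear system $|\psi_n|$ is exactly how Kapranov sets things up, and your description of how the boundary divisor $\delta_{0,I\cup\{n\}}$ collapses onto the span $\langle q_i:i\in I\rangle$ is the right combinatorial content. One small correction: in your first paragraph the projection of a degree $n-2$ Veronese curve from a point on it lands in $\PP^{n-3}$ as a degree $n-3$ rational normal curve, not just ``a rational curve'', and what singles out a point of $\PP^{n-3}$ is the \emph{tangent direction} at $p_n$ (equivalently the image of the $n$-th section), not the curve itself; this is precisely why the map is $|\psi_n|$. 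The transversality bookkeeping you flag in the third step is genuine and is where most of the actual work lies; in the literature this is handled either by Kapranov's direct argument or, more cleanly, by Hassett's weighted-stable-curves machinery, which realises each intermediate $X_k$ as a moduli space $\overline{\cM}_{0,\cA}$ for suitable weights $\cA$ and thereby makes the factorisation and smoothness automatic.
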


The corresponding blow-down map $b_n:\overline{\cM}_{0,n}\ra \PP^{n-3}$ has the following property (see also \cite{keelkernan} Prop. 3.1): the images via $b_n$ of the fibers of the forgetful map $f_n$ over points of ${\cM}_{0,n-1}\subset \overline{\cM}_{0,n-1}$ are the rational normal curves in $\PP^{n-3}$ passing by the $n-1$ general points.

\medskip

Before we define our map $\varphi_{\Lambda_{2n}}: \PP^{2n-2} \longrightarrow \cM_{0,2n}^{GIT}$ we shall introduce a birational isomorphism $r_{2n+1}:\PP^{2n-2}\longrightarrow \overline{\cM}_{0,2n+1}$.

In order to do this we need to recall from \cite{kapra} the Kapranov isomorphism $\kappa_n$: the moduli space $\overline{\cM}_{0,n}$ of stable curves is isomorphic to the variety $V(p_1,\dots,p_n)\subset \cH$ that parametrize (possibly degenerate) Veronese curves passing by the general points $p_i$ in $\PP^{n-2}.$ A trivial example of this construction is $\overline{\cM}_{0,4}\cong \PP^1 \cong |\cI_{p_i}(2)|$, where the points $p_i$ are four general points in $\PP^2$.\\

Our rational map $r_{2n+1}$ is basically an inverse of the blow-down map $b_{2n+1}$. Suppose in fact that we have fixed $2n$ general points $e_i$ in $\PP^{2n-2}$ (a projective base). Then a general point $p\in\PP^{2n-2}$ defines a unique Veronese curve, notably that passing by the $2n+1$ points given by the $e_i$ plus $p$. This, via the isomorphism $\kappa_{2n}$, corresponds to a pointed curve in $\overline{\cM}_{0,2n}$, to which we add the $(2n+1)^{th}$ marking given by $p$ itself. Thus this yields a stable $(2n+1)$-pointed curve which is clearly the inverse image of $p$ via 
$b_{2n+1}$. The following proposition formalizes this.

\begin{proposition}

The rational map $r_{2n+1}:\PP^{2n-2} \longrightarrow \overline{\cM}_{0,2n+1}$ just defined is a rational inverse of the blow-down morphism $b_{2n+1}$.
 
\end{proposition}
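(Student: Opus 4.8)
The plan is to exploit the fact that $b_{2n+1}$ is a birational morphism — being a composition of blow-ups of $\PP^{2n-2}$, by Theorem~\ref{bloua} applied with $n$ replaced by $2n+1$ — so that it already admits a unique rational inverse $b_{2n+1}^{-1}$. To prove the proposition it therefore suffices to show that $r_{2n+1}$ agrees with $b_{2n+1}^{-1}$ on a dense open subset of $\PP^{2n-2}$, and for this I would verify the single identity $b_{2n+1}\circ r_{2n+1}=\id_{\PP^{2n-2}}$ generically. Indeed, over the locus where $b_{2n+1}$ restricts to an isomorphism the fibre $b_{2n+1}^{-1}(p)$ is the unique point of $\overline{\cM}_{0,2n+1}$ lying above $p$, so exhibiting $r_{2n+1}(p)$ as one such preimage forces $r_{2n+1}(p)=b_{2n+1}^{-1}(p)$.

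First I would make the choices in the two Kapranov constructions compatible: take the $2n$ general blow-up centres $q_1,\dots,q_{2n}\in\PP^{2n-2}$ underlying $b_{2n+1}$ to be the very points $e_1,\dots,e_{2n}$ serving as the projective base for $r_{2n+1}$ and as the base points of the isomorphism $\kappa_{2n}$. With this identification the fibre property of $b_{2n+1}$ recorded after Theorem~\ref{bloua} reads: the image under $b_{2n+1}$ of a fibre of the forgetful map $f_{2n+1}$ over a point $[C,x_1,\dots,x_{2n}]\in\cM_{0,2n}$ is exactly the rational normal (Veronese) curve through $e_1,\dots,e_{2n}$ that $\kappa_{2n}$ assigns to $[C,x_1,\dots,x_{2n}]$, and this restriction is an isomorphism sending the universal additional marking to the corresponding geometric point of that curve.

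Granting this, the computation of $b_{2n+1}\circ r_{2n+1}$ is immediate. For $p$ general the $2n+1$ points $e_1,\dots,e_{2n},p$ are in general position in $\PP^{2n-2}$, hence lie on a unique Veronese curve $C_p$; by construction $r_{2n+1}(p)$ is the stable $(2n+1)$-pointed curve lying over $\kappa_{2n}^{-1}(C_p)\in\cM_{0,2n}$ via $f_{2n+1}$, with the extra marking placed at the point of $\kappa_{2n}^{-1}(C_p)$ corresponding to $p\in C_p$. Applying $b_{2n+1}$ together with the fibre property, this point is carried to the point of $\kappa_{2n}\bigl(\kappa_{2n}^{-1}(C_p)\bigr)=C_p$ corresponding to $p$, namely $p$ itself. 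Thus $b_{2n+1}(r_{2n+1}(p))=p$ on the dense open set where $e_1,\dots,e_{2n},p$ are general and the point determined by $p$ is distinct from the $e_i$, which is precisely where $r_{2n+1}$ is defined as a rational map into the interior $\cM_{0,2n+1}$.

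The main obstacle is the compatibility asserted in the second paragraph: one must reconcile Kapranov's two descriptions, of $\overline{\cM}_{0,2n+1}$ through $b_{2n+1}$ and of $\overline{\cM}_{0,2n}$ through $\kappa_{2n}$, and check that the rational normal curve produced by $b_{2n+1}$ on a fibre coincides, marked point by marked point, with the Veronese curve produced by $\kappa_{2n}$. This is the step where the shared ambient space $\PP^{2n-2}$ — the target of $b_{2n+1}$ and the span of the curves parametrized by $\kappa_{2n}$ — is used essentially, and where I would invoke \cite{kapra} to see that $b_{2n+1}$ restricted to a fibre is genuinely an isomorphism onto $C_p$ respecting the $2n+1$ markings. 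Once this is in place everything else is formal, and the uniqueness of the rational inverse of the birational morphism $b_{2n+1}$ yields the proposition.
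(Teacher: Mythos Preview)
Your argument is correct and follows the same line as the paper, which gives no formal proof at all: the paragraph preceding the proposition simply asserts that the constructed $(2n+1)$-pointed curve ``is clearly the inverse image of $p$ via $b_{2n+1}$,'' and the proposition is then stated as a formalization of this observation. Your verification of $b_{2n+1}\circ r_{2n+1}=\id$ on a dense open, together with the compatibility check between the Kapranov descriptions via $\kappa_{2n}$ and via the fibre property recorded after Theorem~\ref{bloua}, is exactly what underlies the paper's ``clearly,'' made explicit.
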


We remark that the rational map $r_{2n+1}$ here above is just a modified, rational version of the GIT association isomorphism 

\begin{equation}\label{asso}
(\PP^{1})^m/PGL(2) \cong (\PP^{m-3})^m/PGL(m-3)
\end{equation}

given by associated point sets in the projective spaces \cite{do:pstf} (Chap III, Prop. 2), \cite{coble}.

\begin{definition}

We call $\varphi_{\Lambda_{2n}}$ the rational map that is the composition of $r_{2n+1}$  with $c_{2n} \circ f_{2n+1}$, so that the follwing diagram commutes.

\begin{equation}\label{diagram}
\xymatrix{ \overline{\cM}_{0,2n+1} \ar[d]_{b_{2n+1}} \ar[r]^{f_{2n+1}} & \overline{\cM}_{0,2n} \ar[d]^{c_{2n}} \\
\PP^{2n-2} \ar[r]^{\map} & \cM_{0,2n}^{GIT} }
\end{equation}

\end{definition}

The linear system that defines $\map$ will be denoted $|\Lambda_{2n}|$.

\section{Linear systems on $\PP^n$ associated to the forgetful maps.}\label{kumm}

It is well known that $\pic(\PP^n)=\ZZ=<\cO_{\PP^n}(1)>$ hence every linear system on $\PP^n$ is a non-trivial vector subspace of $H^0(\PP^n,\cO_{\PP^n}(r))\cong\sym^r(\CC^{n+1})^*$ for some $r>0$. In the following we will denote  $[x_0,\dots,x_n]$ the homogeneus coordinates on $\PP^n$. 

\begin{definition} \label{compo}

We  say that a degree $r$ form $Q$ vanishes at a point $p\in\PP^n$ with multiplicity $y>0$ if all the partial derivatives of order $y-1$ of $Q$ vanish at $p$. 

\end{definition}

We will say that an $r$-form vanishes on a subvariety $V$ of $\PP^n$ with multiplicity $y$ if it vanishes at each point of $V$ with multiplicity at least $y$.

We recall the following easy remarks:

\begin{remark}\label{forms}

\begin{enumerate}

\item If an $r$-form $Q$ on $\PP^n$ vanishes on two points $p$ and $q$ with multiplicities $y_p$ and $y_q$ respectively, then $Q$ vanishes on the line $<p,q>$ spanned by the two points with multiplicity at least $y_p+y_q-r$.

\item If an $r$-form $Q$ on $\PP^n$ vanishes at a finite family of points $\{q_i\}$ in general position with multiplicities $r-1$, then $Q$ vanishes on all the $(k-1)$-dimensional linear subvarieties $<q_{i_1}, \dots , q_{i_k}>$ of $\PP^n$ with multiplicities $r-k$ for $k<r$. This means that the restriction of $Q$ to an $(r-1)$-plane $<q_{i_1}, \dots, q_{i_r}>$ is a scalar multiple of the product of hyperplanes $<q_{i_1}, \dots, \hat{q}_{i_j}, \dots, q_{i_r}>$.

\item Let $e_j = [0 : \dots : 1 : \dots : 0] \in \PP^n$ ($e_j = 1$ at the $j^th$ place). Then an $r$-form
$Q$ vanishes at points $e_j$ (with $j = 1, . . . , n+1$) with multiplicities $r-1$ if and only if it is a
linear combination of monomials $x_1^{\alpha_1}x_2^{\alpha_2}\dots x_{n+1}^{\alpha_{n+1}}$  with $\alpha_j = 0$ or 1 and $\sum_{k=1}^{n+1} \alpha_k=r$. Generally speaking, an $r$-form $Q$ vanishes at points $e_j$ (with $j = 1, . . . , n+1$) with multiplicities $r-m_j$ if and only if it is a linear combination of monomials $x_1^{\alpha_1}x_2^{\alpha_2}\dots x_{n+1}^{\alpha_{n+1}}$ with
$0 \leq \alpha_j \leq m_j$ and
$\sum^{n+1}_{k=1} \alpha_k = r$.

\end{enumerate}

\end{remark}

Let $W$ be the set of the $2n$ fixed general points $e_i$, $i=1,\dots, 2n$, in $\PP^{2n-2}$. For simplicity we will always consider $W$ as a projective base, since any set of this kind can be transformed, via a projective automorphism of $\PP^{2n-2}$, into the set given by the $2n-1$ coordinate points $[0:\dots:1:\dots:0]$ and $[1:\dots:1]$. The most important consequence that we draw out of the prededing remarks is that the linear system of forms of degree $n$ that vanish on the points $e_i$ with multiplicity $n-1$ also vanish on the $(n-2)$-linear spans of $(n-1)$-ples of points of $W$.

\smallskip

%on the configuration of $(n-2)$-planes spanned by $(n-1)$-ples of the fixed point points coincides with the linear %system $|\Omega|$ of degree $n$ forms vanishing with multiplicity $n-1$ on the points of $W$.

Now we recall (Definition \ref{compo}) that our rational map $\map:\PP^{2n-2} \longrightarrow \cM_{0,2n}^{GIT}$ is exactly the composed map $c_{2n} \circ f_{2n+1} \circ r_{2n+1}$. Hence the linear system on $\PP^{2n-2}$ associated to $\map$ has the same dimension of $\PP H^0((\PP^1)^{2n},L)^{PGL(2)}$ and its divisors are the pull-back via $c_{2n} \circ f_{2n+1} \circ r_{2n+1}$ of those of $\cO_{\cM_{0,2n}^{GIT}}(1)$.

%\begin{equation}\label{diagram}
%\xymatrix{ \overline{\cM}_{0,2n+1} \ar[d]_{b_{2n+1}} \ar[r]^{f_{2n+1}} & \overline{\cM}_{0,2n} \ar[d]^{c_{2n}} \\
%\PP^{2n-2} \ar@{-->}[r]^{\map} & \cM_{0,2n}^{GIT} }
%\end{equation} 

\smallskip

Let us denote $\ls$ the linear system associated to $\map$. In order to understand what linear system is $\ls$, we make a few easy remarks on the behaviour of $\map$ on certain linear subspaces spanned by subsets of $W$. We recall moreover that the proper transforms in $\overline{\cM}_{0,2n+1}$ (seen as a blow-up of $\PP^{2n-2}$) of linear spans of subsets of $W$ of cardinality at most $2n-3$ give all the boundary divisors of the moduli space; see \cite{keelkernan} Sect. 3 for a good description of this. One recovers the fact that the strict transform of these linear spans parametrize reducible curves from the following easy fact. All Veronese curves passing by the $2n$ points $e_i$ and a point $p$ belonging to a linear span of (at most) $2n-3$ points belonging to $W$ are $2n+1$-pointed reducible Veronese curves. More precisely the general curve belonging to these boundary divisors has two components.

\begin{theorem}\label{base}
The base locus of the linear system $\ls$ is (set-theoretically) the configuration of $(n-2)$-dimensional linear spaces spanned by all the $(n-1)$-ples of points contained in the set $W$. All the $(n-1)$-dimensional linear spans of $n$-ples of points in $W$ are contracted to points. In particular the two $(n-1)$-planes spanned by complementary $n$-ples of points in $W$ are contracted to the same point.
\end{theorem}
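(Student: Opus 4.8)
The plan is to read off everything from the commutative diagram (\ref{diagram}), using two structural facts: $c_{2n}\circ f_{2n+1}$ is an honest morphism on $\overline{\cM}_{0,2n+1}$ (both factors are morphisms), while $b_{2n+1}$ is a birational morphism. Since $\map\circ b_{2n+1}=c_{2n}\circ f_{2n+1}$, the rational map $\map$ is regular at a point $p_0\in\PP^{2n-2}$ exactly when $c_{2n}\circ f_{2n+1}$ is constant on the whole fibre $b_{2n+1}^{-1}(p_0)$; equivalently, the set-theoretic base locus of $\ls$ is the locus of $p_0$ for which $c_{2n}\bigl(f_{2n+1}(b_{2n+1}^{-1}(p_0))\bigr)$ is more than one point. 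So I would reduce the statement to (i) describing the fibres of the blow-down $b_{2n+1}$, (ii) pushing them forward by the forgetful map, and (iii) reading off the action of $c_{2n}$ on the resulting families.

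For (i)--(ii), write $\langle S\rangle$ for the linear span of $\{e_i:i\in S\}$ and take $p_0$ general in $\langle S\rangle$ with $|S|=k$ (so $p_0$ avoids every smaller span). In the Kapranov blow-up model the exceptional locus over $\langle S\rangle$ is the boundary divisor of $\overline{\cM}_{0,2n+1}$ on which the markings $S$ together with the $(2n+1)$-st (moving) point bubble off; through its product structure $\overline{\cM}_{0,k+2}\times\overline{\cM}_{0,2n-k+1}$ I would check that the position of $p_0$ inside $\langle S\rangle\cong\PP^{k-1}$ rigidifies the bubble factor --- the dimensions $\dim\overline{\cM}_{0,k+2}=k-1=\dim\langle S\rangle$ match --- while the second factor is left free. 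Forgetting the moving point then identifies $f_{2n+1}(b_{2n+1}^{-1}(p_0))$ with $\{[C_S]\}\times\overline{\cM}_{0,2n-k+1}$ inside the boundary divisor $\delta_S$ of $\overline{\cM}_{0,2n}$: the configuration of the $S$-markings and the node on one component is fixed by $p_0$, whereas the configuration of the complementary $S^{c}$-markings and the node ranges over all of $\overline{\cM}_{0,2n-k+1}$.

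Step (iii) is then a direct application of the central-component description of $c_{2n}$. If $k<n$ the central component is the $S^{c}$-side, whose moduli vary; contracting the $S$-side collapses its $k$ markings onto the node and yields a configuration with a point of multiplicity $k<n$ --- hence stable --- which genuinely moves as the free factor varies, so $c_{2n}$ is non-constant and $\map$ is undefined. If $k=n$ the split is the balanced $n\mid n$ one, there is no central component, and by the explicit ``by hand'' part of the definition of $c_{2n}$ the whole family is sent to the single strictly semistable point attached to the unordered partition $\{S,S^{c}\}$; this is constant, so the $(n-1)$-plane $\langle S\rangle$ is contracted to one point, and since $S$ and $S^{c}$ give the same partition the two complementary $(n-1)$-planes land on the same point, which is Parts 2 and 3. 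If $k>n$ the central component is the $S$-side, whose configuration is already fixed by $p_0$, so $c_{2n}$ is constant and $\map$ is regular.

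Assembling the three cases, the base locus meets the general point of $\langle S\rangle$ precisely when $k=|S|<n$, i.e.\ on the spans of at most $n-1$ of the $e_i$; since the base locus is closed and $\bigcup_{|S|=n-1}\langle S\rangle$ already contains all smaller spans, the two closed sets coincide, giving the first assertion. The step I expect to require the most care is the rigidity claim of the second paragraph --- that a general point of $\langle S\rangle$ pins down the bubble factor and frees only the complementary one --- because it is exactly this asymmetry between $k<n$ and $k>n$ that forces the $(n-1)$-ples to appear and nothing smaller or larger. The only remaining bookkeeping is the behaviour over the deeper intersection strata of the spans, which I would control by the same central-component mechanism together with the closedness of the base locus.
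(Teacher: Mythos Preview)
Your proposal is correct and follows essentially the same route as the paper's proof: both arguments reduce the question to analysing, for a general point $p_0$ in a span $\langle S\rangle$ with $|S|=k$, whether $c_{2n}\circ f_{2n+1}$ is constant on the fibre $b_{2n+1}^{-1}(p_0)$, and then split into the cases $k<n$, $k=n$, $k>n$ via the central-component description of $c_{2n}$. The only genuine difference is one of language: you phrase the fibre as the free factor in the product decomposition $\overline{\cM}_{0,k+2}\times\overline{\cM}_{0,2n-k+1}$ of the relevant boundary divisor, whereas the paper describes it geometrically as the family of reducible Veronese curves sharing the fixed component $C_1$ (the unique rational normal curve in $\langle S\rangle$ through the $k$ points of $S$, the moving point $z$, and the intersection $q$ of $\langle S\rangle$ with the complementary span). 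The rigidity step you single out as the delicate one is exactly what the paper settles by this explicit construction of $C_1$; your dimension count $\dim\overline{\cM}_{0,k+2}=k-1=\dim\langle S\rangle$ is the abstract shadow of that same fact.
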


\begin{proof}

We remark that if $p$ is a point contained in the span of any $(2n-3)$-plet contained in $W$ then there exists no unique degenerate Veronese curve $C_1$ passing by the set $W$ plus $p$. This means that $r_{2n+1}$ is not defined in the locus given by this configurations of $\PP^{2n-4}$. This is basically part of the argument hidden behind the blow-up construction of $\overline{\cM}_{0,2n+1}$ of Thm. \ref{bloua}.\\
Consider now the points belonging to the linear spans of $s$-plets of points of $W$ with $s\geq n$ that are not contained in any linear span of $t$-plets of points with $t<n$. This is a quasi-projective sub-variety of $\PP^{2n-2}$ that we will denote $Y$. We can suppose that $z$ is general in the respective $(s-1)$-dimensional linear span, otherwise one can reduce to a lower dimensional linear envelop of points of $W$. The fiber of $b_{2n+1}$ over any point $z\in Y$ represents all the (reducible) Veronese curves passing by $W$ and $z$. Let us now consider these Veronese curves.\\
Suppose first that $s>n$. Then we see that any of the reducible Veronese curves represented by points of $b_{2n+1}^{-1}(z)$ contains a fixed component $C_1$: $C_1$ is the only rational normal curve in $\PP^{s-1}$ passing by the $s$ points, $z$ and the point $q$, intersection of $\PP^{s-1}$ with the linear span of the other $2n-s$  points of $W$. Thus the fiber of $b_{2n+1}$ over $z$ can be seen to represent the family of Veronese curves passing by $q$ and the remaining $2n-s$ points contained in the linear span of these points. Now recall the definition of the morphism $f_{2n+1}$ and in particular of the contraction map $c_{2n}$. We remark that the fiber $b_{2n+1}^{-1}(z)$ is completely contracted by $c_{2n}\circ f_{2n+1}$ to a unique configuration of points in $\cM_{0,2n}^{GIT}$ where all the component(s) containing the $2n-s$ points of $W$ are contracted on $C_1$ to the point $q$.\\ 
Less intricately, if we assume $s=n$ all the fiber of $b_{2n+1}$ over $z$ is contracted to the semi-stable point of $\cM_{0,2n}^{GIT}$ corresponding to the choice of the $s$ points in $W$ and complementary choices in $W$ of the $s$ points correspond to the same configuration. This implies that the map given by the linear system $\ls$ is well defined and regular on any point of $Y$. A similar argument shows that if $s<n$ then the fiber of $b_{2n+1}$ over $z$ is not completely contracted by $c_{2n}\circ f_{2n+1}$, hence these linear spans are contained in the exceptional locus of the rational map. %tutte le componenti si deformano mentre mi sposto lungo la fibra
\end{proof}

In the following we will denote $\Sec^k(W)$ the configuration of $k$-dimensional linear subspaces spanned by $(k+1)$ points belonging to $W$. For $k=1$ we will simply write $\Sec(W)$. With this notation the base locus of theorem \ref{base} is denoted $\Sec^{n-2}(W)$.

\begin{remark}\label{bricks}
We recall that any irreducible Veronese curve $B$ passing by the $2n$ points of $\PP^{2n-2}$ is pulled back by $b_{2n+1}$ to a curve in $\overline{\cM}_{0,2n+1}$ that is the fiber of $f_{2n+1}$ over the point of $\overline{\cM}_{0,2n}$ that corresponds to $B$ via the Kapranov bijection $\kappa_n$. This in turn implies that, since by definition diagram \ref{diagram} commutes, the map $\map$ contracts every such rational normal curve.

Moreover, from Thm. \ref{base} we know that each $\PP^{n-1}$ spanned by $n$ of the $2n$ fixed points of $W$ is contracted to a point. This implies the following easy fact. Let $D$ be any divisor of the linear system $\ls$ and $\overline{\PP}^{n-1}$ any of the preceding $(n-1)$-dimensional linear spans; then set-theoretically $D\cap \overline{\PP}^{n-1}= \Sec^{n-2}(W)\cap \overline{\PP}^{n-1}$.
\end{remark}

We recall from the introduction that we denote $|\Omega_{2n}|$ the linear system of forms of degree $n$ that vanish with multiplicity $n-1$ at the points of $W$ and by $\mapd$ the associated rational map.

\begin{proposition}\label{main}

We have an isomorphism of linear systems on $\PP^{2n-2}$

\begin{equation}\label{iso}
\ls \cong |\Omega_{2n}|
\end{equation}

that induces an identity of rational maps 

$$\map=\mapd : \PP^{2n-2} \longrightarrow \cM_{0,2n}^{GIT}\subset \ls^* \cong |\Omega_{2n}|^*.$$

\end{proposition}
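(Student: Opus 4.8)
The plan is to prove the identity of linear systems in (\ref{iso}) by first showing the inclusion $\ls\subseteq|\Omega_{2n}|$ and then matching dimensions. The first observation is that the two systems vanish on the same locus: the base locus of $\ls$ is $\Sec^{n-2}(W)$ by Theorem \ref{base}, while a form of $|\Omega_{2n}|$, being of degree $n$ and vanishing to order $n-1$ at the $e_i$, vanishes along every $(n-2)$-plane spanned by $(n-1)$ of them, with multiplicity $n-(n-1)=1$, by Remark \ref{forms}(2). It therefore remains only to determine the degree of a general $D\in\ls$ and its multiplicity $m:=\operatorname{mult}_{e_i}D$ at the points of $W$, and to check these equal $n$ and $n-1$ respectively.

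The multiplicity is controlled by Bézout's theorem applied along a contracted curve. Fix a general Veronese curve $B$ through $W$; it has degree $2n-2$ and, being contracted by $\map$ (Remark \ref{bricks}), it meets a general $D\in\ls$ only inside the base locus, hence only at the $2n$ points $e_i$, since the points of $B$ are in linear general position and so every $(n-2)$-plane of $\Sec^{n-2}(W)$ cuts $B$ exactly in the $(n-1)$ of the $e_i$ that span it. As $B$ is smooth and, for general $B$, transverse to $D$ at each $e_i$, the local intersection number there is exactly $m$, so Bézout gives $\deg D\cdot(2n-2)=2n\,m$, that is $\deg D\,(n-1)=n\,m$. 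This is the step where the parity of the number of marked points enters: since $\gcd(n,n-1)=1$, the relation forces $n\mid\deg D$, so in the even case the degree can only be $n,2n,\dots$, in contrast to the much larger forced values that appear for an odd number of points.

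To cut the degree down to exactly $n$, I would restrict $D$ to one of the linear spans $\overline{\PP}^{n-1}=\langle e_{i_1},\dots,e_{i_n}\rangle$ of $n$ points of $W$. By Remark \ref{bricks} the restriction $D\cap\overline{\PP}^{n-1}$ is supported, set-theoretically, on the $n$ hyperplanes of $\overline{\PP}^{n-1}$ spanned by the $(n-1)$-subsets of $\{e_{i_1},\dots,e_{i_n}\}$; granting that the base scheme is reduced, these appear with multiplicity one, so $\deg D=n$. Combined with the divisibility from the previous paragraph this gives at once $m=n-1$, whence $D$ is a degree $n$ form of multiplicity $n-1$ at every point of $W$ and $\ls\subseteq|\Omega_{2n}|$. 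The main obstacle is precisely this intersection-theoretic bookkeeping: one must verify that a general Veronese curve meets a general $D$ only at the $2n$ base points and transversally there, and that the base locus is reduced, so that the single relation $\deg D\,(n-1)=n\,m$ together with $\deg D=n$ pins down $(\deg D,m)=(n,n-1)$ without ambiguity.

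Finally I would upgrade the inclusion to an equality by dimensions. By construction $\dim\ls=\dim\PP H^0((\PP^1)^{2n},L)^{PGL(2)}$, so it suffices to bound $\dim|\Omega_{2n}|$ above by this number. The cleanest route reuses the Bézout computation: it applies verbatim to any form of $|\Omega_{2n}|$, which therefore meets every Veronese curve through $W$ only at the points of $W$, so that $\mapd$ contracts all of these curves. Since they are exactly the fibers of $\map$ and sweep out $\PP^{2n-2}$, the map $\mapd$ factors through $\map$; together with the inclusion $\ls\subseteq|\Omega_{2n}|$, which exhibits $\map$ as a linear projection of $\mapd$, this leaves no room for extra sections and forces $\ls=|\Omega_{2n}|$. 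Alternatively one may invoke the classical value of the dimension of the space of degree $n$ forms vanishing to order $n-1$ at $2n$ general points, which matches $\dim\PP H^0((\PP^1)^{2n},L)^{PGL(2)}$ through the association isomorphism (\ref{asso}). The asserted identity $\map=\mapd$ of rational maps then follows immediately.
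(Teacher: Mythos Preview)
Your overall plan---prove $\ls\subseteq|\Omega_{2n}|$ via a B\'ezout count along a contracted Veronese curve, then match dimensions---is exactly the paper's. The B\'ezout step leading to the relation $(\deg D)(n-1)=n\mu$ is also the same. The divergences, and the gaps, are in the two remaining steps.

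First, you silently assume that the multiplicity $m$ of a general $D\in\ls$ is the same at every $e_i$; without this, B\'ezout only gives $(\deg D)(2n-2)=\sum_i\mu_i$. The paper supplies this by observing that $b_{2n+1},f_{2n+1},c_{2n}$, and hence $\map$, are $\Sigma_{2n}$-equivariant for the action permuting the $e_i$ by the projective automorphisms $A_\sigma$; invariance of $\ls$ then forces $\mu_1=\cdots=\mu_{2n}$. Second, to pin down $\deg D=n$ rather than $pn$, you restrict to a span $\overline{\PP}^{n-1}$ and invoke reducedness of the base scheme, which you yourself flag as unproved; nothing rules out the restriction being the $p$-th power of the product of the $n$ coordinate hyperplanes. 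The paper avoids this by a different argument: a form of degree $pn$ vanishing to order $p(n-1)$ at each $e_i$ vanishes to order $p$ along every component of $\Sec^{n-2}(W)$, and since $\map$ must resolve to the morphism $c_{2n}\circ f_{2n+1}$ after the single sequence of reduced blow-ups in Theorem~\ref{bloua}, only $p=1$ is compatible.

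For the dimension match, your factorization argument is not quite enough: knowing that $\mapd$ contracts the same curves as $\map$ and that $\map$ is a linear projection of $\mapd$ does not by itself bound $\dim|\Omega_{2n}|$, because a nondegenerate variety can project isomorphically to a smaller projective space. The paper instead computes both sides directly: $\dim H^0((\PP^1)^{2n},L)^{PGL(2)}=\frac{(2n)!}{(n+1)!n!}$ by the hook length formula, and $\dim V_{\Omega_{2n}}=\binom{2n-1}{n}-\binom{2n-1}{n-2}$ by writing an $n$-form with multiplicity $n-1$ at the coordinate points as $\sum_{|I|=n}a_Ix_I$ and showing the $(n-1)$-fold vanishing at $e_{2n}$ imposes $\binom{2n-1}{n-2}$ independent linear conditions. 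Equality of these two numbers, together with injectivity of the $\Sigma_{2n}$-equivariant pullback $\rho$ (the source being an irreducible $\Sigma_{2n}$-module), finishes the proof.
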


\begin{proof}

We recall from Theorem \ref{base} that the base locus of $\ls$ is (set-theoretically) $\Sec^{n-2}(W)$ and from Remark \ref{bricks} that $\map$  contracts each rational normal curve passing by $W$ to a point of the image. Remark furthermore that the intersection of any such rational normal curve with the base locus $\Sec^{n-1}(W)$ is set-theoretically the set $W$ itself.

This means the following. Let $D$ be any divisor of $\ls$, i.e. a homogeneus form of some degree $r$ that vanishes 
on $\Sec^{n-1}(W)$, and  $\mu_1,\dots,\mu_{2n}\in \NN$ the multiplicities of $D$ at the points $e_1,\dots,e_{2n}$ of $W$. Let $\tilde{C}$ be any rational normal curve in $\PP^{2n-2}$ passing by $W$. Then, since set-theoretically $D\cap \tilde{C}=W$ and any rational normal curve is contracted by $\ls$, we have the following equation between integers

\begin{equation}\label{mult}
r(2n-2)=\mu_1+\dots + \mu_{2n}.
\end{equation}

Let $\Sigma_n$ be the symmetric group on $n$ elements. The points $e_i$ of $W$ are labelled with numbers from 1 to $2n$. Hence by the classical properties of projective automorphisms (see for example \cite{har:ag}), for every $\sigma \in \Sigma_{2n}$ there exists a unique automorphism $A_{\sigma}$ of $\PP^{2n-2}$ such that the permutation induced on the points $e_i$ is exactly that corresponding to $\sigma$. 

\smallskip

Moreover it is easy to see that the maps $b_{2n+1}, f_{2n+1}, c_{2n}$ are $\Sigma_{2n}$-equivariant, where the action on $\overline{\cM}_{0,2n}$ and $\cM_{0,2n}^{GIT}$ is the natural one, that on $\PP^{2n-2}$ is given by the automorphisms $A_{\sigma}$ and that on $\overline{\cM}_{0,2n+1}$ is obtained from the natural $\Sigma_{2n+1}$-action by forgetting the last point. Hence, since diagram \ref{diagram} must commute, $\map$ is a $\Sigma_{2n}$-equivariant map, with respect to the natural action on $\cM_{0,2n}^{GIT}$ and via the automorphisms $A_{\sigma}$ on $\PP^{2n-2}$. This implies that the linear system $\ls$ is invariant with respect to the $\Sigma_{2n}$-action induced on $n$-forms by the one on $\PP^{2n-2}$ given by the automorphisms $A_{\sigma}$.

Since $\ls$ is $\Sigma_{2n}$-invariant then $\mu_1=\dots = \mu_{2n}:=\mu$ and, up to multiplication by two, Equation \ref{mult} becomes the integral equation 

\begin{equation}\label{equa}
r(n-1)=n\mu;
\end{equation}

 i.e. the elements of $\ls$ are forms of degree $r$ that vanish with multiplicity $\mu$ at the points $e_i$ for $\mu,n,r \in \NN$ that realize the equality above. These observations imply that $\ls$ can only be a linear subsystem of the linear system of $pn$-forms that vanish with multiplicity $\mu=p(n-1)$ on each point $e_i$, $i=1,\dots,2n$, for any $p\in \NN$. Remark that these forms, by Remark \ref{bricks}, vanish $p$ times on the configuration of planes $\Sec^{n-2}(W)$.

Now we recall that, since diagram \ref{diagram} commutes, the rational map $\map$ defined by $\ls$ should resolve to the morphism $c_{2n}\circ f_{2n+1}$ after the subsequent blow-ups described in Theorem \ref{bloua}. Hence, since if $p>1$ the forms of $\ls$ vanish with multiplicity greater than 1 on $\Sec^{n-1}(W)$, one sees that the only value of $p$ for which such a blow-up resolves the map $\map$ is $p=1$. If $p>2$ in fact one would need further blow-ups. This means that $\ls$ is a linear susbsystem of $|\Omega_{2n}|$. This can be rephrased as following. Let $V_{\Omega_{2n}}$ denote the linear space of forms associated to $|\Omega_{2n}|$ and let $\rho$ be the pull-back map defined by $\map$, then $\rho$ is a non-zero homomorphism 

$$\rho: H^0(\cM_{0,2n}^{GIT},\cO(1))=H^0((\PP^1)^{2n},L)^{PGL(2)} \lra  V_{\Omega_{2n}}.$$

The target space is a $\Sigma_{2n}$-module via the action induced by the projective automorphisms $A_{\sigma}$, $H^0((\PP^1)^{2n},L)^{PGL(2)}$ is an irreducible $\Sigma_{2n}$-module and it can be showed that $\rho$ is equivariant with respect to these $\Sigma_{2n}$-actions.\\ Since $H^0((\PP^1)^n,L)^{PGL(2)}$ is irreducible, $\rho$ must be injective. By checking that $dim(V_{\Omega_{2n}})=dim(H^0((\PP^1)^n,L)^{PGL(2)})=dim(\ls)+1$ we will prove that $\rho$ is in fact an isomorphism and conclude our proof. This computation takes inspiration from those in \cite{kumar2}, Sect. 3.1.

Via the bijection between $\Sigma_n$-modules and Young tableaux, the $\Sigma_{2n}$-module $H^0(\cM_{0,2n}^{GIT},\cO(1))$ corresponds to the Young tableau consisting of 2-rows and $n$-columns, hence by the hook lenght formula \cite{do:pstf} we get

\begin{equation}\label{hook}
dim (H^0(\cM_{0,2n}^{GIT},\cO(1)))=\frac{(2n)!}{(n+1)!n!}.
\end{equation}

%In fact we want to compute the dimension of the linear system of forms of degree $n$ that vanish with multiplicity %$(n-1)$ on $2n$ points in $\PP^{2n-2}$, so let us simply take a projective base $e_1,\dots, e_{2n}$.

Let us now compute the dimension of $V_{\Omega_{2n}}$. Let $N_k$ be the set $\{1,\dots,k\}$ of the first $k$ positive integers. Let us moreover define two other sets:

\begin{eqnarray*}
R:=\{I\subset N_{2n-1} : \#(I)=n\},\\
S:=\{J\subset N_{2n-1}: \#(J)=n-2\}.
\end{eqnarray*}

If $I=\{i_1,\dots,i_r\}$, by $x_I$ we will mean the monomial $x_{i_1}x_{i_2}\dots x_{i_r}$.
By the third observation of Remark \ref{forms} any $n$-form $Q$ vanishing with multiplicity $n-1$ at the $2n-1$ coordinate points $e_1,\dots,e_{2n-1}$ is of the type $\sum_{I\in R} a_I x_I$ with $a_I\in \CC$. Then we have to add the condition of vanishing with multiplicity $r-1$ at $e_{2n}$ thus obtaining

$$dim(V_{\Omega_{2n}})=\bigl\{ \sum_{I\in R} a_Ix_I: \sum_{J\subset I \in R} a_I=0, \mathrm{for\ all\ } J\in S \bigr\}.$$

Let $(\lambda_{IJ})_{I\in R, J\in S}$ be the incidence matrix defined by the condition $\lambda_{IJ}$ if $J\subset I$ and $\lambda_{IJ}=0$ if $J\not \subset I$. The matrix $(\lambda_{IJ})$ has maximal rank hence the conditions among the generators $\{x_I: I\in R\}$ of $V_{\Omega_{2n}}$ are indipendent. This implies that 

\begin{equation}\label{dimens}
dim(V_{\Omega_{2n}})= \#(R) - \#(S)= \left(\begin{array}{c}
2n-1 \\ n\\
\end{array}\right) - \left(\begin{array}{c}
2n-1 \\ n-2 \\
\end{array}\right).
\end{equation}

It is easy then to see that Eq. \ref{dimens} is equal to $\frac{(2n)!}{(n+1)!n!}$, which was the correct value given by the Hook formula in Equation \ref{hook}. This implies that $\rho$ is an isomorphism and concludes the proof.

\end{proof}

\section{The relation with Kumar's linear systems and with the effective cone of $\overline{\cM}_{0,n}$}\label{relation}

\subsection{The relation with Kumar's work}

In this section we will show the relation between our linear systems $|\Omega_{2n}|$ and the ones considered in \cite{kumar2}; this part dues a lot to a remark by Igor Dolgachev. Let us first recall the following classical definition, that we will be using in the following.

\begin{definition}
A Cremona inversion is a birational involution of the projective space defined as follows 

\begin{eqnarray*}
Cr_n: \PP^n & \longrightarrow & \PP^n; \\
\left[ x_0:\dots:x_n \right] & \mapsto &  \left[1/x_0:\dots : 1/x_n \right].
\end{eqnarray*}

\end{definition}

Given the usual projective base $e_1,\dots,e_{n+2}$ for $\PP^n$ as in the previuos sections, the Cremona inversion has the following property. If $l$ is a line in $\PP^n$ passing by $e_{n+2}$ and not lying in the span of any subset of the other coordinate points, then $Cr_n(l)$ is a rational normal curve passing by all the $n+2$ points of the projective base. One can define Cremona inversions for any projective base and the points $e_1,\dots, e_{n+1}$ are called the \textit{base} of the Cremona inversion.

\begin{example}
On $\PP^2$ the Cremona inversion is given by the linear system of quadrics passing by 3 general points. On $\PP^3$ by the cubics that vanish on the six secant lines of 4 general points, and so on.
\end{example}

Because of their relation with rational normal curves Cremona inversions are, up to a projection $\pi_{n-1}:\PP^{n-3} \ra \PP^{n-4}$ with center the last point $e_{n-1}$, the projective analogue of the forgetful map $f_{n}:\overline{\cM}_{0,n} \ra \overline{\cM}_{0,n-1}$. By this we mean that the following diagram is commutative (see \cite{kapra}, Prop. 2.10).

\begin{equation*}
\xymatrix{ \overline{\cM}_{0,n} \ar[d]_{b_{n}} \ar[rr]^{f_{n}} & & \overline{\cM}_{0,n-1} \ar[d]^{b_{n-1}} \\
\PP^{n-3} \ar[rr]^{\pi_{n-1}\circ Cr_{n-3}} & & \PP^{n-4} %& \subset \PP (H^0((\PP^1)^n,L)^{PGL(2)*}) 
  }
\end{equation*} 

Let us now come back to our linear systems and give a brief account of Kumar's work. Let $|\Xi_{2n}|$ be the linear system on $\PP^{2n-3}$ given by degree $n-1$ hypersurfaces vanishing at $2n-1$ fixed points in general position with multiplicity $n-2$. In \cite{kumar2} the author shows that there exists an isomorphism between $\PP (H^0((\PP^1)^n,L)^{PGL(2)})^*$ and the linear system $|\Xi_{2n}|^*$ and that $\PP^{2n-3}$ is mapped birationally by $|\Xi_{2n}|$ onto $\cM_{0,2n}^{GIT}$. His construction relies on the association isomorphism (see Equation \ref{asso}) and moduli theoretically the map given by $|\Xi_{2n}|$ works as follows (see also \cite{do:pstf}, Chapter III for more details). We set $2n-1$ fixed points in $\PP^{2n-3}$, hence for every general point $p\in \PP^{2n-3}$ there exists a unique rational normal curve passing by $p$ and the fixed points. Then the point $p$ is sent on the configuration given by these $2n$ points on this unique rational curve. Hence one sees that this is a birational map.

\begin{lemma}
The birational map $\varphi_{\Xi_{2n}}:\PP^{2n-3} \ra \cM_{0,2n}^{GIT}$ induced by $|\Xi_{2n}|$ resolves to a morphism $\tilde{\varphi}_{\Xi_{2n}}:\overline{\cM}_{0,2n} \ra \cM_{0,2n}^{GIT}$ that coincides with the contraction morphism 
$c_{2n}$ defined in Section \ref{moduli}.
\end{lemma}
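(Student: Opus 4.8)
The plan is to resolve $\varphi_{\Xi_{2n}}$ by precomposing it with the Kapranov blow-down $b_{2n}:\overline{\cM}_{0,2n}\ra\PP^{2n-3}$ of Theorem \ref{bloua}, and to prove that the resulting rational map is nothing but $c_{2n}$. Since $b_{2n}$ is birational, it suffices to identify the two maps on the dense open set $\cM_{0,2n}$, where everything is an honest isomorphism, and then to invoke the fact (available from \cite{lanar}) that $c_{2n}$ is a genuine \emph{morphism} in order to conclude that the resolution exists and coincides with it.

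First I would introduce the rational inverse $r_{2n}:\PP^{2n-3}\ra\overline{\cM}_{0,2n}$ of $b_{2n}$, the exact analogue of the map $r_{2n+1}$ of Section \ref{bang}. Concretely, fixing the $2n-1$ general points of the blow-up construction, a general $p\in\PP^{2n-3}$ lies with them on a unique rational normal curve $R$ of degree $2n-3$; equipping $R$ with the $2n-1$ fixed points gives, via the Kapranov isomorphism, a point of $\overline{\cM}_{0,2n-1}$, to which we adjoin $p$ as the $(2n)$-th marking to obtain the stable curve $r_{2n}(p)\in\overline{\cM}_{0,2n}$. By construction $r_{2n}(p)$ is the fibre of $b_{2n}$ over $p$, so that $r_{2n}\circ b_{2n}=\id$ as rational maps.

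Next I would match $\varphi_{\Xi_{2n}}$ with $c_{2n}\circ r_{2n}$. By Kumar's moduli-theoretic description, $\varphi_{\Xi_{2n}}$ sends a general $p$ to the configuration in $\cM_{0,2n}^{GIT}$ cut out on $R$ by those same $2n$ points (the $2n-1$ fixed ones together with $p$). On the open locus this configuration is exactly the smooth $2n$-pointed rational curve $r_{2n}(p)\in\cM_{0,2n}$; since $c_{2n}$ restricts to the canonical isomorphism on $\cM_{0,2n}$, this yields $\varphi_{\Xi_{2n}}=c_{2n}\circ r_{2n}$ there. (This is precisely where Kumar's reliance on the association isomorphism of Equation \ref{asso} enters.) Combining with $r_{2n}\circ b_{2n}=\id$ gives
\[
\varphi_{\Xi_{2n}}\circ b_{2n}=c_{2n}\circ r_{2n}\circ b_{2n}=c_{2n}
\]
on the dense open $\cM_{0,2n}$.

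Finally I would upgrade this generic equality to a global one. Both sides are rational maps from the smooth (hence normal) separated variety $\overline{\cM}_{0,2n}$ to the proper variety $\cM_{0,2n}^{GIT}$ that agree on a dense open set, so they coincide as rational maps; because $c_{2n}$ is everywhere defined, the composite $\varphi_{\Xi_{2n}}\circ b_{2n}$ resolves to the morphism $\tilde{\varphi}_{\Xi_{2n}}=c_{2n}$, as desired. The main obstacle is the boundary bookkeeping needed to justify the identification $\varphi_{\Xi_{2n}}=c_{2n}\circ r_{2n}$ rigorously, in particular that $r_{2n}$ really is the rational inverse of $b_{2n}$ in the Kapranov/association picture and that Kumar's map agrees with the contraction of non-central components along the relevant boundary strata. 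However, since we only need agreement on the dense open $\cM_{0,2n}$ and then exploit that $c_{2n}$ is already a morphism, this delicate boundary analysis can be bypassed, and the everywhere-definedness of the resolution comes for free from the morphism property of $c_{2n}$.
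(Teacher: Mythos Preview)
Your argument is correct and follows essentially the same two-step strategy as the paper: identify $\varphi_{\Xi_{2n}}\circ b_{2n}$ with $c_{2n}$ on the dense open $\cM_{0,2n}$, then globalize. The only structural difference is in how regularity of the resolution is obtained: the paper first observes that the base locus of $|\Xi_{2n}|$ is contained in the centres of Kapranov's blow-up, so that $\tilde{\varphi}_{\Xi_{2n}}$ is a morphism before comparison with $c_{2n}$; you instead deduce regularity \emph{a posteriori} from the fact that the rational map coincides on a dense open with the already-regular $c_{2n}$. Your route has the advantage of not needing to inspect the base locus of $|\Xi_{2n}|$ at all, while the paper's route is more self-contained in that it does not lean on the regularity of $c_{2n}$ established elsewhere. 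The detour through $r_{2n}$ is unnecessary but harmless --- you could simply note that on the open set both $\tilde{\varphi}_{\Xi_{2n}}$ and $c_{2n}$ realise the canonical identification $\cM_{0,2n}\cong\cM_{0,2n}$.
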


\begin{proof}

Let us consider $\overline{\cM}_{0,2n}$ as a blow-up of $\PP^{2n-3}$ as described in Thm. \ref{bloua}. Then the base locus is contained in the locus that we recursively blow up hence the map $\varphi_{\Xi_{2n}}$ is resolved on $\overline{\cM}_{0,2n}$. Furthermore, on the open set $\cM_{0,2n}$ the morphism $\tilde{\varphi}_{\Xi_{2n}}$ and $c_{2n}$ coincide, hence they coincide everywhere and the following diagram commutes.

\begin{equation}\label{tria}
\xymatrix{ \overline{\cM}_{0,2n} \ar[dr]^{c_{2n}} \ar[d]_{b_{2n}} \\
\PP^{2n-3} \ar[r]^{\varphi_{\Xi_{2n}}} & \cM_{0,2n}^{GIT} %& \subset \PP (H^0((\PP^1)^n,L)^{PGL(2)*}) 
  }
\end{equation}

\end{proof}

\begin{proposition}
The rational map $\mapd: \PP^{2n-2} \ra \cM^{GIT}_{0,2n}$ is the composed map $\varphi_{\Xi_{2n}}\circ \pi_{2n} \circ Cr_{2n-2}.$
\end{proposition}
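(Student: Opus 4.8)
The plan is to obtain the statement as a formal diagram chase, combining the three commutative diagrams already at our disposal and feeding in no new geometry. By Proposition~\ref{main} together with the Definition preceding it, the map $\mapd$ coincides with $\map = c_{2n}\circ f_{2n+1}\circ r_{2n+1}$; so it suffices to rewrite $c_{2n}\circ f_{2n+1}$ in terms of $\varphi_{\Xi_{2n}}$, $\pi_{2n}$ and $Cr_{2n-2}$, and then to cancel $r_{2n+1}$ against $b_{2n+1}$.

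First I would specialise Kapranov's comparison of the forgetful map with the Cremona--projection map (the diagram displayed just above, \cite{kapra} Prop.~2.10) to the case $n\mapsto 2n+1$. Under this substitution $b_n:\overline{\cM}_{0,n}\ra\PP^{n-3}$ becomes $b_{2n+1}:\overline{\cM}_{0,2n+1}\ra\PP^{2n-2}$, $b_{n-1}$ becomes $b_{2n}$, and $\pi_{n-1}\circ Cr_{n-3}$ becomes $\pi_{2n}\circ Cr_{2n-2}$, so the diagram records the identity of rational maps $b_{2n}\circ f_{2n+1}=\pi_{2n}\circ Cr_{2n-2}\circ b_{2n+1}$. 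Next I would invoke the Lemma above, whose diagram~\ref{tria} gives $c_{2n}=\varphi_{\Xi_{2n}}\circ b_{2n}$, and the Proposition asserting that $r_{2n+1}$ is a rational inverse of $b_{2n+1}$, i.e. $b_{2n+1}\circ r_{2n+1}=\id_{\PP^{2n-2}}$. Chaining these gives
\[
\mapd = c_{2n}\circ f_{2n+1}\circ r_{2n+1}
      = \varphi_{\Xi_{2n}}\circ b_{2n}\circ f_{2n+1}\circ r_{2n+1},
\]
and then, substituting the Kapranov identity and cancelling,
\[
b_{2n}\circ f_{2n+1}\circ r_{2n+1}
 = \pi_{2n}\circ Cr_{2n-2}\circ b_{2n+1}\circ r_{2n+1}
 = \pi_{2n}\circ Cr_{2n-2},
\]
so that $\mapd=\varphi_{\Xi_{2n}}\circ\pi_{2n}\circ Cr_{2n-2}$, as claimed.

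Since every identity used is only an equality of rational maps, I would perform all the compositions and cancellations over the common dense open subset of $\PP^{2n-2}$ on which each map is a morphism — in particular where $r_{2n+1}$ is an isomorphism onto its image and $b_{2n+1}\circ r_{2n+1}$ is the identity, and away from the indeterminacy loci of $Cr_{2n-2}$ and $\pi_{2n}$ — which is harmless because all maps involved are birational onto their images or generically of relative dimension one. The one point that genuinely requires care, and the main obstacle, is the bookkeeping of the marked-point labels across the three diagrams: the point forgotten by $f_{2n+1}$, the base $\{e_1,\dots,e_{2n-1}\}$ of $Cr_{2n-2}$, the centre $e_{2n}$ of $\pi_{2n}$, and the fixed configuration $W$ used to define $r_{2n+1}$ and the $2n-1$ base points of $\varphi_{\Xi_{2n}}$ must all be matched to the same ordered set of points, so that the specialised Kapranov diagram and diagram~\ref{tria} really are the diagrams built from these specific maps. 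Once this identification of the point data is pinned down, the equality of rational maps is purely formal.
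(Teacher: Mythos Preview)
Your proof is correct and is essentially the same diagram chase as the paper's: both combine the Kapranov comparison $b_{2n}\circ f_{2n+1}=\pi_{2n}\circ Cr_{2n-2}\circ b_{2n+1}$, the triangle $c_{2n}=\varphi_{\Xi_{2n}}\circ b_{2n}$, and the defining diagram for $\map$. The only cosmetic difference is that the paper phrases the conclusion as ``both rational maps resolve via $b_{2n+1}$ to the same morphism on $\overline{\cM}_{0,2n+1}$'', whereas you compose with the rational inverse $r_{2n+1}$ and cancel $b_{2n+1}\circ r_{2n+1}=\id$; these are equivalent formulations of the same argument.
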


\begin{proof}
Recall from Diagram \ref{diagram} that the composed map $\mapd\circ b_{2n+1}$ is equal to $c_{2n}\circ f_{2n+1}: \overline{\cM}_{0,2n+1} \ra \cM^{GIT}_{0,2n}.$ Now let us complete Diagram \ref{tria} with the forgetful maps in the following way.

\begin{equation*}
\xymatrix{\overline{\cM}_{0,2n+1} \ar[d]_{b_{2n+1}} \ar[rr]^{f_{2n+1}} & & \overline{\cM}_{0,2n} \ar[dr]^{c_{2n}} \ar[d]_{b_{2n}} \\
\PP^{2n-2} \ar[rr]^{\pi_{2n}\circ Cr_{2n-2}} & &\PP^{2n-3} \ar[r]^{\varphi_{\Xi_{2n}}} & \cM_{0,2n}^{GIT} %& \subset \PP (H^0((\PP^1)^n,L)^{PGL(2)*}) 
  }
\end{equation*} 

This means that the rational maps $\mapd$ and $\varphi_{\Xi_{2n}}\circ \pi_{2n} \circ Cr_{2n-2}$ are resolved, via the Kapranov's blow-up, to the same morphism on $\overline{\cM}_{0,2n+1}$. This implies that they are the same map.

%Since $\cM_{0,2n}^{GIT}$ is normal, then by Zariski's main Theorem we conclude that they are the same rational map.

\end{proof}

\subsection{Examples of effective linear systems and contractions on $\overline{\cM}_{0,n}$}

We conclude these section by looking at our results in a slightly different framework, namely that of fibrations on  $\overline{\cM}_{0,n}$, ample and semi-ample divisors. We recall that a fibration on $\overline{\cM}_{0,n}$ is a proper and surjective morphism $f:\overline{\cM}_{0,n} \ra X$ with $f_*\cO_{\overline{\cM}_{0,n}}=\cO_X$, where X is a scheme with $dim(X)<dim(\overline{\cM}_{0,n})$. If $f$ is a fibration then $X$ is irreducible, normal and projective.
It is nowadays clear that, under some of the aspects just mentioned, the geometry of $\overline{\cM}_{0,n}$ is unexpecetdly more complicated than that of $\overline{\cM}_{g,n}$ for $g>0$. For instance the following theorems from \cite{GKM} are known to be true only for $g\geq 1$.

\begin{theorem}\label{vun}
For $g\geq 1$ any fibration of $\overline{\cM}_{g,n}$ to a projective variety factors through a projection to
some $\overline{\cM}_{g,i}$  with $i < n$.
\end{theorem}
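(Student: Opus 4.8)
The plan is to translate the statement into the language of semiample divisors and the Mori cone. A fibration $f:\overline{\cM}_{g,n}\ra X$ onto a projective variety, in the sense defined above (proper, surjective, with $f_{*}\cO=\cO_{X}$ and $\dim X<\dim\overline{\cM}_{g,n}$), is the same datum as a semiample line bundle that is not big: one sets $L=f^{*}H$ for $H$ ample on $X$, and the fibers of $f$ are precisely the connected positive-dimensional subvarieties along which $L$ is numerically trivial. Equivalently, $f$ is the contraction of the extremal face $F\subset\overline{\mathrm{NE}}(\overline{\cM}_{g,n})$ consisting of the curve classes $\gamma$ with $L\cdot\gamma=0$. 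So it suffices to prove that this face $F$ is the face contracted by one of the projection morphisms $\pi_{T}$ that forget a subset $T$ of the marked points; I would do this by induction on $n$, reducing at each step to forgetting a single point and invoking the known structure of $\overline{\cM}_{g,i}$ for small $i$ as the base of the induction.

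The inductive engine is a dichotomy read off the forgetful maps $\pi_{i}:\overline{\cM}_{g,n}\ra\overline{\cM}_{g,n-1}$. A general fiber of $\pi_{i}$ is a copy of the universal genus-$g$ curve carrying the remaining $n-1$ markings; I denote by $B_{i}$ its class, the ``vertical'' curve swept out by letting the $i$-th point move on a fixed stable pointed curve. Since $L$ is nef, for each $i$ either $L\cdot B_{i}=0$ or $L\cdot B_{i}>0$. In the first case $L$ is trivial on the general fiber of $\pi_{i}$, so $B_{i}\in F$; a see-saw and rigidity argument, using that the fibers of $\pi_{i}$ are connected and sweep out the whole (normal) space, lets me descend $L$ to a nef class on $\overline{\cM}_{g,n-1}$, whence $f$ factors through $\pi_{i}$ and I apply the inductive hypothesis to the induced fibration. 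Thus the entire content is concentrated in ruling out the opposite case.

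The genuinely hard case, and the main obstacle, is when $L\cdot B_{i}>0$ for every $i$, i.e.\ when $F$ contains no vertical class $B_{i}$. The goal is then to show $L$ is ample, contradicting non-bigness and closing the induction. The delicate point is that relative ampleness along every forgetful map does \emph{not} formally force global ampleness: the face $F$ could in principle be generated by \emph{horizontal} classes, that is, by multisection curves meeting each $\pi_{i}$-fiber positively yet contracted by $f$. Excluding all such contracted curves is exactly where the hypothesis $g\geq 1$ must enter. I would test $L$ against the full supply of one-dimensional boundary strata (the F-curves) and against these horizontal curves, using the non-negativity of the tautological $\psi$- and $\omega$-classes together with the fact that a positive-genus backbone rigidifies the configuration; this is precisely the structural feature absent in genus $0$, where the GIT contractions and the relative-dimension-one maps onto $\cM_{0,2n}^{GIT}$ studied in this paper provide honest fibrations forgetting no point, so that the analogous statement fails.

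I expect the crux to be this conversion of ``positive on all $\pi_{i}$-fibers'' into global ampleness. Concretely it amounts to a classification of which F-curves can simultaneously pair to zero with a non-big nef class on $\overline{\cM}_{g,n}$, and showing, via the genus-dependent geometry of $\overline{\mathrm{NE}}(\overline{\cM}_{g,n})$, that such a nontrivial configuration always contains some $B_{i}$. Once that is in place, the dichotomy of the previous paragraph together with the inductive descent yields the factorization through a projection $\pi_{T}$ to $\overline{\cM}_{g,i}$ with $i<n$.
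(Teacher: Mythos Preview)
The paper does not prove this theorem. It is quoted from \cite{GKM} (Gibney--Keel--Morrison) purely as context for Section~5.2, where the point is precisely that the genus-zero analogue is \emph{not} known and that the maps constructed in this paper illustrate why the $g\geq 1$ hypothesis matters. There is therefore no ``paper's own proof'' to compare your proposal against.

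Regarding your sketch on its own merits: the overall architecture---replace $f$ by a semiample non-big $L$, test $L$ against the fiber classes $B_i$ of the forgetful maps, descend through $\pi_i$ whenever $L\cdot B_i=0$, and argue ampleness in the remaining case---is indeed the shape of the argument in \cite{GKM}. But you have correctly located, and then not resolved, the only substantive step: proving that $L\cdot B_i>0$ for every $i$ forces $L$ to be ample. Your last two paragraphs say, in effect, ``I expect this to work and here is roughly why,'' which is an outline, not a proof. In \cite{GKM} this step is handled via their structural results relating the nef cone of $\overline{\cM}_{g,n}$ to that of quotients $\overline{\cM}_{0,N}/\Sigma_N$ and the combinatorics of F-curves, not by the informal appeal to $\psi$-classes and rigidity you gesture at. Until that ingredient is supplied, the proposal remains a plausible plan rather than a proof.
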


\begin{theorem}\label{du}
Let $g\geq 1$ and let $f : \overline{\cM}_{g,n} \ra X$ be a birational morphism to a projective variety. Then the
exceptional locus of $f$ is contained in the boundary of $\overline{\cM}_{g,n}$. In particular X is again a compactification of $\cM_{g,n}$.
\end{theorem}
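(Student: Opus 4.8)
The plan is to translate the statement into a positivity question about a single nef divisor and then to isolate the one place where the hypothesis $g\geq 1$ is genuinely used. First I would fix an ample divisor $A$ on $X$ and set $H:=f^{*}A$. Since $f$ is a surjective birational morphism and $X$ is projective, $H$ is nef and big (its top self-intersection is $A^{\dim X}>0$). By the projection formula an irreducible complete curve $C\subset\overline{\cM}_{g,n}$ is contracted by $f$ exactly when $H\cdot C=0$, and every point of the exceptional locus lies on some contracted curve, since a positive-dimensional fibre through it is swept out by curves mapping to a point. Hence the theorem is equivalent to the assertion that every irreducible complete curve $C$ with $H\cdot C=0$ lies in the boundary $\partial\overline{\cM}_{g,n}$; the last sentence then follows because $f$ restricts to an open immersion on $\cM_{g,n}$, so $X$ contains $\cM_{g,n}$ as a dense open subset.

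The key self-contained step, valid for every $g$, disposes of curves that are fibres of a forgetful map. Let $\pi:\overline{\cM}_{g,n}\ra\overline{\cM}_{g,n-1}$ be the forgetful morphism realizing $\overline{\cM}_{g,n}$ as the universal curve. Its fibres form a flat family over a connected base, so they are numerically equivalent. Thus if a single fibre $F$ satisfied $H\cdot F=0$, then $H\cdot F'=0$ for every fibre $F'$; being nef of total degree $0$ on each (possibly nodal) fibre, $H$ would be numerically trivial on all of them, hence numerically a pullback $H\equiv\pi^{*}H'$ from $\overline{\cM}_{g,n-1}$. But then $H^{\dim\overline{\cM}_{g,n}}=\pi^{*}\bigl(H'^{\,\dim\overline{\cM}_{g,n}}\bigr)=0$, because $\dim\overline{\cM}_{g,n-1}=\dim\overline{\cM}_{g,n}-1$, contradicting the bigness of $H$. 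So no forgetful fibre can be contracted.

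The remaining and hardest step is to reduce an arbitrary contracted interior curve to this fibre situation, and this is exactly where $g\geq 1$ enters. The tool is the Hodge class $\lambda$, which for $g\geq 1$ is nef and semiample: it is the pullback of the ample generator on the Satake compactification $\cA_{g}^{\mathrm{Sat}}$ under the extended Torelli map $\overline{\cM}_{g,n}\ra\overline{\cM}_{g}\ra\cA_{g}^{\mathrm{Sat}}$. For a curve $C$ meeting the interior one has $\lambda\cdot C=0$ precisely when the family of Jacobians over $C$ is constant, i.e.\ (by Torelli, and by the $j$-line when $g=1$) when $C$ is isotrivial and therefore lies in a fibre of the total forgetful map $\overline{\cM}_{g,n}\ra\overline{\cM}_{g}$. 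The plan is to show, following the nef-cone analysis of \cite{GKM}, that a contracted curve meeting $\cM_{g,n}$ forces $H$ to be numerically trivial along the fibres of this forgetful fibration, after which the bigness contradiction of the previous paragraph applies fibrewise over $\overline{\cM}_{g}$. Propagating the vanishing $H\cdot C=0$ from one interior curve to the whole fibration — by comparing $H$ with $\lambda$, the classes $\psi_{i}$ and the boundary divisors and exploiting the positivity of $\lambda$ — is the technical heart of the matter and runs parallel to the fibration Theorem \ref{vun}. I expect this propagation to be the main obstacle; it is also precisely the step that collapses when $g=0$, where $\lambda\equiv 0$ and genuine birational contractions with interior exceptional locus do occur.
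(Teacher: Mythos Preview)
The paper does not prove Theorem~\ref{du} at all. It is quoted verbatim from \cite{GKM} as a known result, purely for context: the author's point in Section~5.2 is that this theorem and Theorem~\ref{vun} are established for $g\geq 1$ but remain open (or fail) for $g=0$, and that the contraction map $c_{2n}$ gives a genus-zero instance of the phenomenon in Theorem~\ref{du}. There is no argument in the paper to compare your proposal against.

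Your sketch is in the right spirit for the actual GKM proof --- reduce to a nef/big pullback, study curves of degree zero, and invoke the positivity of $\lambda$ for $g\geq 1$ --- and you correctly locate the hard step (propagating the vanishing from one interior curve to an entire fibration) as well as the place where $g=0$ breaks down. If you want to turn this into a self-contained proof you will indeed have to go into \cite{GKM} and extract that propagation argument; the present paper will not help you with it.
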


As I already stated, quite unexpectedly there are still no analogous very general theorems in genus zero. However some remarkable resultes have been obtained. For example in \cite{brumel} it is showed (Theorem 1.1) that, for $n\leq 7$, a dominant morphism $f: \overline{\cM}_{0,n} \ra X$ to a scheme factors as $f=g\circ h$, where $g$ is a generically finite morphism and $h$ belongs to certain classes of forgetful morphism.
We remark that for any even $n>2$, the fibrations defined by our maps $c_{2n}\circ f_{2n+1}$ are examples of one of the phenomenons described by Bruno and Mella in Thm. 1.1 of \cite{brumel}; namely, the fact that any fibration of $\overline{\cM}_{0,n}$ s.t. the generic fiber is a rational curve factors through the elementary forgetful map. However, in \cite{brumel} they also give an example of a fibration whose general fiber is an elliptic curve. On the other hand the contraction morphism $c_{2n}=\tilde{\varphi}_{\Xi_{2n}}$ is an example in genus 0 of a birational morphism like those of Theorem \ref{du}.

\bibliographystyle{amsalpha}
\bibliography{bibkaji} 

\end{document}